\documentclass{article}

\usepackage[numbers]{natbib}

\usepackage{hyperref}
\hypersetup{
    colorlinks=true,
     allcolors =blue,
    pdftitle={Real and complex stability radii for a class of transport networks},
    pdfpagemode=FullScreen,
    pdfauthor={A. Hastir, B. Jacob, and H. Wagener}
    }

\usepackage[english]{babel}
\usepackage[utf8]{inputenc}
\usepackage{float}
\usepackage{amsmath}
\usepackage{amssymb}
\usepackage{esint}
\usepackage{amsthm}

\usepackage[a4paper,margin=2cm]{geometry}

\usepackage[toc,page]{appendix}

\usepackage{mathrsfs}
\usepackage{bm}

\newtheorem{theorem}{Theorem}[section]
\newtheorem{corollary}[theorem]{Corollary}
\newtheorem{lemma}[theorem]{Lemma}
\newtheorem{proposition}[theorem]{Proposition}
\theoremstyle{definition}
\newtheorem{definition}[theorem]{Definition}
\newtheorem{nota}[theorem]{Notation}

\newtheorem{remark}[theorem]{Remark}

\usepackage{tikz,
}
\usetikzlibrary{cd}

\newcommand{\NN}{\mathbb{N}}

\newcommand{\RR}{\mathbb{R}}
\newcommand{\CC}{\mathbb{C}}
\newcommand{\KK}{\mathbb{K}}
\newcommand{\DD}{\mathbb{D}}

\newcommand{\rank}{\mathrm{rank}}
\DeclareMathOperator{\ran}{ran}

\DeclareMathOperator{\dist}{dist}
\newcommand{\T}{\top}

\newcommand{\Sl}{\left\langle}
\newcommand{\Sr}{\right\rangle}
\newcommand{\Nl}{\left\|}
\newcommand{\Nr}{\right\|}

\renewcommand{\L}{\mathrm{L}}
\renewcommand{\H}{\mathrm{H}}
\newcommand{\C}{\mathrm{C}}

\renewcommand{\d}{\mathrm{d}}
\renewcommand{\epsilon}{\varepsilon}

\renewcommand{\Re}{\mathrm{Re}}
\renewcommand{\Im}{\mathrm{Im}}

\begin{document}

\title{Real and complex stability radii for a class of transport networks}
\author{Anthony Hastir\thanks{University of Namur, Department of Mathematics and naXys, Rue de Bruxelles 61, 5000 Namur, Belgium\linebreak 
  ({anthony.hastir@unamur.be}).}
  \and Birgit Jacob\thanks{University of Wuppertal, School of Mathematics and Natural Sciences
Gaußstraße 20, 42119 Wuppertal, Germany ({bjacob@uni-wuppertal.de}, {wagener@uni-wuppertal.de}).}
\and Hannes Wagener\footnotemark[2]}

\maketitle

\paragraph{Keywords:} Robustness of stability, Stability radius, Boundary control and observation, Dual system, Controllability and observability

\section*{Abstract}
We characterize stability and its robustness for a class of boundary controlled, boundary observed hyperbolic partial differential equations. In particular, we show that asymptotic and exponential stability coincide for this class.  Furthermore, we introduce the real and complex stability radii for which we give explicit formulas. In addition, we consider the dual system and its application to controllability and observability. Our main results are illustrated with two examples.

\section{Introduction and main results}
We consider the following hyperbolic partial differential equations (PDEs), 
\begin{align}\tag{\text{$\Sigma_{P_0}$}}\label{eq:WN-P0}
\begin{split}
\frac{\partial {x}}{\partial t}(\xi, t) &= -\frac{\partial}{\partial \xi} \left( \lambda_0(\xi){x}(\xi, t)\right) +P_0(\xi)\lambda_0(\xi){x}(\xi, t),   \quad t\geq 0, \xi\in [0,1], \quad {x}(\xi,0)= {x}_0(\xi),  \quad\xi\in [0,1], \\
\left[\begin{matrix}
0\\I
\end{matrix}\right]u(t)&=-K \lambda_0(0){x}(0, t)-L\lambda_0(1){x}(1,t),  \quad t\geq 0,\\
y(t)&=- K_y \lambda_0(0){x}(0, t)-L_y \lambda_0(1){x}(1,t), \quad t\geq 0,
\end{split}
\end{align}
with initial condition  ${x}_0\in \L^2([0,1];\KK^{n})$, inputs $u(t)\in\KK^m$, $m\leq n$ and outputs $y(t)\in\KK^k$, $t\geq 0$, respectively.
The functions $\lambda_0$ and $P_0$ satisfy $\lambda_0, \lambda_0^{-1}\in \L^\infty([0,1];(0,\infty))$ and $P_0\in \L^\infty([0,1];\KK^{n\times n})$ and  $K,L\in\KK^{n\times n}$, $K_y,L_y\in\KK^{k\times n}$, where $\KK=\RR$ or $\KK=\CC$.
We assume that $K$ is invertible, which is equivalent to well-posedness, see Section \ref{sec:Well-posedness}.

The class \eqref{eq:WN-P0} may model different physical phenomena such as electric lines, the movement of a fluid in a pipe, the deflection of a vibrating string, or more generally conservation laws. An overview of the applications that fall into the class \eqref{eq:WN-P0} may be found in \cite{BasG2016b}. Variations around \eqref{eq:WN-P0} are also available in many different works. In particular, \cite{LuoZ1999b} considers a more general version of \eqref{eq:WN-P0}, in which $\lambda_0$ is replaced by a diagonal matrix with different entries. Many other works considered systems in the form \eqref{eq:WN-P0}. For instance, explicit formula of the Riesz basis and a spectral analysis of \eqref{eq:WN-P0} have been given in \cite{HasA2024a}. An $H^\infty$-optimal control problem has been studied in \cite{HasA2025a3}, while the LQ optimal control problem together with the associated Riccati equations has been treated in \cite{HasA2025a, HasA2025a2}. Lyapunov exponential stability was considered in \cite{BasG2007a,CorJ2007a,DiaA2012a}. In those works, the function $\lambda_0$ has been replaced by a diagonal matrix and the boundary conditions, inputs and outputs are slightly different compared to \eqref{eq:WN-P0}. Backstepping for boundary control and output feedback stabilization for networks of hyperbolic PDEs has been studied in \cite{AurJ2020a,AurJ2021a,VazR2012a}. Many relevant applications in this direction may be found in \cite{VazAurKrst2026}. In the references \cite{deHJ2003a,PriC2008a,PriC2018a} the authors develop boundary feedback control laws for one-dimensional hyperbolic PDEs whose dynamics are close to \eqref{eq:WN-P0}. The class \eqref{eq:WN-P0} has also attracted attention regarding boundary output feedback stabilization, which can be found in e.g.~\cite{TanA2018a}. Feedback stabilization with a numerical point of view has also been treated for a class of hyperbolic PDEs similar to \eqref{eq:WN-P0} in \cite{Gottlich2017}. Some results about stability and robust stabilization for \eqref{eq:WN-P0} may also be found in the literature. For instance, robust stabilization of a class of hyperbolic PDEs has been developed in \cite{Zhang2026}. It is also worth mentioning that under some additional assumption, \eqref{eq:WN-P0} may be embedded into the class of so-called port-Hamiltonian systems, for which a lot of different questions have already been investigated. For instance, stability has been studied in e.g.~\cite{AugB2014a,GerH2024a,VilJ2009a}. Thanks to the port-Hamiltonian formalism, equivalent conditions for exponential stability of \eqref{eq:WN-P0} have also been discovered in \cite{HasA2024a} thanks to a spectral analysis. Controllability has also been studied for hyperbolic PDEs similar to \eqref{eq:WN-P0}. In particular, the approximate and exact controllability has already been investigated for a system similar to \eqref{eq:WN-P0} in \cite{ChiY2024a}, where $\lambda_0$ is replaced by a diagonal matrix with possibly different entries and where $P_0$ is also assumed to be diagonal. In the case of commensurable wave speeds, the authors of \cite{ChiY2024a} show that exact and approximate controllability are equivalent and can be characterized by some matrix tests.

In this paper we focus on the stability and on the robustness of stability of \eqref{eq:WN-P0} under output feedback. Moreover we discuss the real stability radius for this class. For more details about the stability radius, we refer the reader to~\cite{HinD1990a} in which linear finite-dimensional systems are considered and \cite{PriA1990a} for an extension to infinite dimension. More precisely, we show that exponential and asymptotic stability are equivalent for \eqref{eq:WN-P0} and may be characterized by a simple matrix condition. 

Furthermore, we consider the dual system of \eqref{eq:WN-P0} and its application to controllability and observability. In particular, we take advantage of the discrete-time approach which we use for the stability of robustness to obtain concrete matrix tests for the controllability with matrices which are based on $K,L,K_y,L_y$ and $P_0$. Thanks to a duality result we do characterize observability via similar matrix conditions.

Before going further, we spend a few lines in giving a meaning to $x$ and $y$ in \eqref{eq:WN-P0}. This brings us to the concept of well-posedness, by which we mean the following: \eqref{eq:WN-P0} is said to be well-posed if for every $u\in\L^2_{\mathrm{loc}}([0,\infty);\KK^m)$ and for every $x_0\in \L^2([0,1],\KK^n)$, there exists a unique ${x}\in \C([0,\infty);\L^2([0,1],\KK^n))$ which is continuously dependent on $x_0$ and $u$, such that the output $y$ satisfies $y\in\L^2_{\mathrm{loc}}([0,\infty);\KK^k)$. For further details we refer to Section \ref{sec:Well-posedness}. 
In particular, the system \eqref{eq:WN-P0} is well-posed if and only if the matrix $K$ is invertible. Under this condition, we may define the following matrices
\begin{align}\label{eq:Mat}
\begin{split}
A &:= -K^{-1}LP(1)\in\KK^{n\times n},\hspace{0.5cm} B := -K^{-1}\left[\begin{smallmatrix}0\\ I\end{smallmatrix}\right]\in\KK^{n\times m},\\
C &:= (K_y K^{-1}L - L_y)P(1)\in\KK^{k\times n},\hspace{0.5cm} D := K_y K^{-1}\left[\begin{smallmatrix}0\\ I\end{smallmatrix}\right]\in\KK^{k\times m},    
\end{split}
\end{align}
where $P(\xi)$ is given by the solution of the initial value problem \begin{align*}
P'(\xi)&=P_0(\xi)P(\xi), \quad \xi\in(0,1], \quad P(0)=I.
\end{align*}
Especially, if $P_0\equiv0$, we have $P(1)=I$ and if $P_0$ is constant, we have $P(1)=\mathrm{e}^{P_0}$.

In Section \ref{sec:Well-posedness} we show, that the matrices introduced in \eqref{eq:Mat} are crucial and allow us to rewrite \eqref{eq:WN-P0} equivalently in the much simpler form
\begin{align*}
\begin{split}
\frac{\partial \tilde{x}}{\partial t}(\xi, t) &= -\frac{\partial}{\partial \xi} \left( \lambda_0(\xi)\tilde{x}(\xi, t)\right),   \quad t\geq 0, \xi\in [0,1], \quad\tilde{x}(\xi,0)= \tilde{x}_0(\xi),  \quad\xi\in [0,1], \\
B u(t)&=\lambda_0(0)\tilde{x}(0, t)-A\lambda_0(1)\tilde{x}(1,t),  \quad t\geq 0,\\
y(t)&=C\lambda_0(1)\tilde{x}(1, t)+D u(t), \quad t\geq 0,
\end{split}
\end{align*}
where $x$ and $\tilde{x}$ satisfy the relation  $\tilde{x}(\xi,t) = P(\xi)^{-1}x(\xi,t)$, $\xi\in [0,1]$, $t\geq 0$.

In this paper we derive tractable conditions for the stability and the robustness of stability of the infinite‑\allowbreak dimensional system \eqref{eq:WN-P0} that depend only on the associated constant matrices $A$, $B$, $C$ and $D$. The cornerstone of our approach is an equivalent reformulation of \eqref{eq:WN-P0} as an infinite‑dimensional discrete‑time system whose dynamics are governed by multiplication operators with constant matrices. This representation allows us to prove that the stability (resp.~robust stability) of the original system can be inferred from a finite‑dimensional system constructed solely from the matrices $A$, $B$, $C$ and $D$. Consequently, one can analyse the infinite‑dimensional problem by means of standard finite‑dimensional tools.

The remaining of this section is dedicated to the exposition of our main results, whose proofs can be found in Section~\ref{sec:FinAppr}.
We now assume that the system \eqref{eq:WN-P0} is (asymptotically) stable and aim to characterize the smallest perturbation (measured in a prescribed norm) that destabilizes \eqref{eq:WN-P0}. The perturbations under consideration arise from static output feedback of the form  
$u(t) = \Delta y(t)$ with $\Delta\in\KK^{m\times k}$. 
When this feedback law is applied to \eqref{eq:WN-P0}, we have that
\begin{align}\tag{\text{$\Sigma_{P_0}^\Delta$}}\label{eq:WN-P0-pert}
\begin{split}
\frac{\partial {x}}{\partial t}(\xi, t) &= -\frac{\partial}{\partial \xi} \left( \lambda_0(\xi){x}(\xi, t)\right) +P_0(\xi)\lambda_0(\xi){x}(\xi, t),   \quad t\geq 0, \xi\in [0,1], \quad {x}(\xi,0)= {x}_0(\xi),  \quad\xi\in [0,1], \\
0&= -\left(K-\left[\begin{smallmatrix}
0\\I
\end{smallmatrix}\right]\Delta K_y\right)\lambda_0(0){x}(0, t) - \left(L-\left[\begin{smallmatrix}
0\\I
\end{smallmatrix}\right]\Delta L_y\right) \lambda_0(1){x}(1,t), \, t\geq 0.
\end{split}
\end{align}

 According to Lemma~\ref{lem:Well-Posedness_Cl}, the perturbed system \eqref{eq:WN-P0-pert} is well-posed, that is, the corresponding operator generates a strongly continuous semigroup, if and only if 
$I-\Delta D$ is invertible, where $D$ is defined in \eqref{eq:Mat}.
Consequently, we henceforth assume that $I-\Delta D$ is invertible. Our notion of stability is formalized in the following definition.

\begin{definition}
    Let $\Delta\in\KK^{m\times k}$ and assume that $I-\Delta D$ is invertible. The system \eqref{eq:WN-P0-pert} is asymptotically stable, if for all $x_0\in \L^2([0,1];\KK^n)$ we have \begin{align*}
        \lim_{t\to\infty} x(\cdot,t) = 0
    \end{align*} in $\L^2([0,1];\KK^n)$,  where $x$ denotes the solution of \eqref{eq:WN-P0-pert}. If, in addition, there exist $\omega>0$ and $M\geq 1$ such that for all $x_0\in \L^2([0,1];\KK^n)$ and all $t>0$ we have that \begin{align*}
         \Nl x(\cdot,t) \Nr_{\L^2([0,1]; \KK^n)} \leq M \mathrm{e}^{-\omega t}  \Nl x_0 \Nr_{\L^2([0,1]; \KK^n)},
     \end{align*}
     then \eqref{eq:WN-P0-pert} is said to be exponentially stable.
\end{definition}

One of our main results is that asymptotic and exponential stability are equivalent for \eqref{eq:WN-P0-pert}, and can be characterized by a simple matrix condition.

\begin{proposition}\label{prop:MainresStab}
   Let $A,B,C$ and $D$ be defined as in \eqref{eq:Mat}, $\Delta\in\KK^{m\times k}$ and assume that $I-\Delta D$ is invertible. \eqref{eq:WN-P0-pert} is asymptotically stable if and only if it is exponentially stable, which is equivalent to\footnote{We denote with $\sigma(J)$ the set of all eigenvalues of the matrix $J$.}\linebreak $\sigma\left(A+B(I-\Delta D)^{-1}\Delta C\right)\subset \DD:=\{z\in\CC: |z|<1\}$.
\end{proposition}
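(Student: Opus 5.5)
The plan is to reduce \eqref{eq:WN-P0-pert} to an unperturbed system of the same type and then to a discrete-time problem governed by a constant matrix. First, applying the transformation $\tilde x(\xi,t)=P(\xi)^{-1}x(\xi,t)$ from Section~\ref{sec:Well-posedness} and inserting $u=\Delta y$ into the simplified boundary relations gives
\begin{align*}
Bu(t) = \lambda_0(0)\tilde x(0,t) - A\lambda_0(1)\tilde x(1,t), \qquad u(t) = \Delta C\lambda_0(1)\tilde x(1,t) + \Delta D u(t).
\end{align*}
Since $I-\Delta D$ is invertible, $u(t)=(I-\Delta D)^{-1}\Delta C\lambda_0(1)\tilde x(1,t)$. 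Hence the closed loop is the pure transport equation $\partial_t \tilde x = -\partial_\xi(\lambda_0\tilde x)$ with the boundary condition
\begin{align*}
\lambda_0(0)\tilde x(0,t) = A_\Delta \lambda_0(1)\tilde x(1,t), \qquad A_\Delta := A+B(I-\Delta D)^{-1}\Delta C.
\end{align*}
Because $P,P^{-1}\in \L^\infty$, the multiplication by $P^{-1}$ is an isomorphism of $\L^2([0,1];\KK^n)$. It therefore preserves both asymptotic and exponential stability, so it suffices to treat this transport system.

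Next, straighten the characteristics. Let $\tau(\xi)=\int_0^\xi \lambda_0(s)^{-1}\,\d s$ and $T:=\tau(1)>0$, and put $z(t,\sigma) := \lambda_0(\xi)\tilde x(\xi,t)$ with $\sigma=\tau(\xi)$. Then $z$ satisfies $z(t,\sigma)=z(t-\sigma,0)$, and the boundary condition becomes $z(t,0)=A_\Delta z(t-T,0)$. The norm $\|\tilde x(\cdot,t)\|_{\L^2}$ is equivalent, with constants depending only on $\|\lambda_0\|_\infty$ and $\|\lambda_0^{-1}\|_\infty$, to $\|z(t,\cdot)\|_{\L^2([0,T])}$. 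Writing $w_k(s):=z(kT+s,0)$ for $s\in[0,T)$ gives the discrete-time system $w_{k+1}=A_\Delta w_k$ on $\L^2([0,T];\KK^n)$, in which $A_\Delta$ acts pointwise as a multiplication operator. Moreover, the state $\tilde x(\cdot,t)$ for $t\in[kT,(k+1)T]$ is determined by $w_k$ and $w_{k+1}$, and its norm is comparable to $\|w_k\|+\|w_{k+1}\|$. Thus $\|T(kT+s)\|$ is bounded above and below by multiples of $\|A_\Delta^k\|+\|A_\Delta^{k+1}\|$, since the operator norm of a constant multiplication operator equals the matrix norm.

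The equivalences then follow. If $\sigma(A_\Delta)\subset\DD$, then $\|A_\Delta^k\|\le M r^k$ for some $r<1$, which gives exponential stability. Exponential stability trivially implies asymptotic stability. For the converse, suppose $\mu\in\sigma(A_\Delta)$ with $|\mu|\ge1$, and take an eigenvector $v$. Choose the initial state so that $w_0(s)=v$ is constant on $[0,T)$ (on the original state this is $x_0(\xi)=P(\xi)\lambda_0(\xi)^{-1}v$). Then $w_k=\mu^k v$ does not tend to zero, contradicting asymptotic stability.

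The main obstacle is the rigorous identification of the $C_0$-semigroup solution with this characteristic or discrete-time representation, including the norm equivalences. This means checking that the solution defined in Section~\ref{sec:Well-posedness} coincides with the characteristic formula for all $\L^2$ data. I would verify it first on classical solutions (smooth data satisfying the compatibility conditions) and then extend by density. For $\KK=\RR$ the eigenvector $v$ may be complex. In that case I would use $\Re(\mu^k v)$ or $\Im(\mu^k v)$, at least one of which does not converge to zero.
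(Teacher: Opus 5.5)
Your proposal is correct and follows essentially the same route as the paper. Both arguments eliminate $P_0$, close the loop to obtain the boundary matrix $A+B(I-\Delta D)^{-1}\Delta C$, and straighten characteristics to reach a discrete-time system driven by a constant multiplication operator; they then use constant initial data together with $\|M_J\|=\|J\|$, though the paper cites the finite-dimensional equivalence where you argue directly with a (real or imaginary part of an) eigenvector, and your indexing is slightly off: with $w_k(s)=z(kT+s,0)$, the choice $x_0=P\lambda_0^{-1}v$ gives $w_0=\mu v$ rather than $v$, which is harmless.
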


We say that \eqref{eq:WN-P0} is \emph{asymptotically stable} or \emph{exponentially stable}, if the state $x(\cdot,t)$ of the homogeneous system (i.e.~\eqref{eq:WN-P0} with $u(t)\equiv0$) tends to zero or tends exponentially fast to zero for $t\to\infty$, respectively.
The homogeneous system is the perturbed system \eqref{eq:WN-P0-pert} when $\Delta=0$. Consequently, stability can be characterized by using Proposition \ref{prop:MainresStab}. Therefore, \eqref{eq:WN-P0} is asymptotically stable if and only if \eqref{eq:WN-P0} is exponentially stable, which is also equivalent to $\sigma(A)\subset \DD$. Since asymptotic and exponential stability are equivalent for \eqref{eq:WN-P0} and \eqref{eq:WN-P0-pert}, we write shortly that \eqref{eq:WN-P0} or \eqref{eq:WN-P0-pert} are \emph{stable}, when they are asymptotically or exponentially stable.

By robustness of stability we mean the following question: how \emph{large} can the perturbation $\Delta$ be while the perturbed system remains stable? To address this, we introduce the stability radius, distinguishing between real‑valued and complex‑valued perturbations.
\begin{definition}\label{def:stab_radius}
    For a stable system \eqref{eq:WN-P0} the stability radius is defined by
    \begin{align*}
        r_\KK\eqref{eq:WN-P0}&:= \inf \left\lbrace \Nl \Delta \Nr_{\KK^{m\times k}} : \Delta \in \KK^{m\times k}, I-\Delta D \text{ is not invertible or }\eqref{eq:WN-P0-pert} \text{ is not stable} \right\rbrace.
    \end{align*}
\end{definition}
The stability radius can also be expressed via a supremum in the following way \begin{align*}
        r_\KK\eqref{eq:WN-P0}&= \sup \left\lbrace \Nl \Delta \Nr_{\KK^{m\times k}}  : \Delta \in \KK^{m\times k}, I-\Delta D \text{ is invertible and }\eqref{eq:WN-P0-pert} \text{ is stable} \right\rbrace.
    \end{align*}

Our main result concerning the robustness of stability is the following:

\begin{theorem}\label{thm:MainresRob}
  Let the matrices $A,B,C$ and $D$ be defined as in \eqref{eq:Mat} and let \eqref{eq:WN-P0} be stable (equiv.~$\sigma(A)\subset \DD$). We consider\footnote{The notation $\rho(A)$ stands for the resolvent set of $A$, i.e., $\rho(A) := \CC\setminus\sigma(A)$.} $G(z)=C(z-A)^{-1} B+D\in\CC^{k\times m}$, $z\in \rho(A)$. Then the complex stability radius is given by
     \begin{align*}
         r_\CC\eqref{eq:WN-P0}= \left[{\max_{z\in\partial \DD} \Nl G(z) \Nr_{\CC^{k\times m}} }\right]^{-1}.
     \end{align*}
 
For the real stability radius we  assume that all norms are induced by the Euclidean vector-norm.
  In the case where $m=1$ or $k=1$, the real stability radius takes the form
\begin{align*}
   r_\RR\eqref{eq:WN-P0}= \left[ \max \left\lbrace\Nl D\Nr_{\KK^{k\times m}} , \max_{z\in\partial\DD} \dist\left( \Re (G(z)), \RR \Im (G(z)) \right) \right\rbrace \right]^{-1}
\end{align*}
    and when $m,k>1$ we obtain
          \begin{align*}
         r_\RR\eqref{eq:WN-P0}=\left[{\max\left\lbrace \Nl D \Nr_{\KK^{k\times m}} , \max_{z\in\partial \DD}\inf_{\gamma\in (0,1]} \sigma_2\left(\left[\begin{smallmatrix}
        \Re(G(z)) & -\gamma \Im(G(z)) \\
        \frac{1}{\gamma}\Im(G(z)) & \Re(G(z))
    \end{smallmatrix}\right]\right) \right\rbrace}\right]^{-1}
     \end{align*}
    where $\sigma_2(J)$ stands for the second largest spectral value of a matrix $J$. In all cases the convention $\frac{1}{\infty}=0$ holds.         
\end{theorem}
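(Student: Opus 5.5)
By Proposition~\ref{prop:MainresStab}, the problem reduces to a finite-dimensional discrete-time stability radius problem. For $\Delta\in\KK^{m\times k}$ with $I-\Delta D$ invertible, the system \eqref{eq:WN-P0-pert} is stable if and only if $\sigma(A_\Delta)\subset\DD$, where
\[
A_\Delta:=A+B(I-\Delta D)^{-1}\Delta C.
\]
Hence $r_\KK\eqref{eq:WN-P0}$ is the infimum of $\Nl\Delta\Nr$ over all $\Delta$ for which either $I-\Delta D$ is singular or $A_\Delta$ has an eigenvalue in $\CC\setminus\DD$.

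The first key step is a Schur-complement identity. For $z\in\rho(A)$ and $I-\Delta D$ invertible, I will show that $z\in\sigma(A_\Delta)$ if and only if $I-\Delta G(z)$ is singular, equivalently $I-G(z)\Delta$ is singular. The computation is
\[
z-A_\Delta=(z-A)\bigl(I-(z-A)^{-1}B(I-\Delta D)^{-1}\Delta C\bigr).
\]
The determinant identity $\det(I-XY)=\det(I-YX)$ then reduces singularity to that of
\[
I-(I-\Delta D)^{-1}\Delta C(z-A)^{-1}B,
\]
and multiplying by $I-\Delta D$ gives $I-\Delta G(z)$.

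The second step is a continuity and homotopy argument. Eigenvalues of $A_{t\Delta}$ depend continuously on $t\in[0,1]$ as long as $I-t\Delta D$ stays invertible, and $A_0=A$ is stable. So if $A_\Delta$ is unstable, either some $t\in[0,1]$ gives an eigenvalue of $A_{t\Delta}$ on $\partial\DD$, or $I-t\Delta D$ becomes singular for some $t$. As $|z|\to\infty$ we have $G(z)\to D$, so singularity of $I-\Delta D$ can be read as the instability condition "at $z=\infty$". Since $t\Delta$ is no larger in norm than $\Delta$, this yields
\[
r_\KK\eqref{eq:WN-P0}=\inf\bigl\{\Nl\Delta\Nr: \exists\, z\in\partial\DD\cup\{\infty\} \text{ with } I-\Delta G(z) \text{ singular}\bigr\},
\]
where $G(\infty):=D$. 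Conversely, any $\Delta$ making $I-\Delta G(z)$ singular at some $z\in\partial\DD$ (with $I-\Delta D$ invertible) is destabilizing, and the case $z=\infty$ is directly in the definition.

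Everything is now reduced to the pointwise quantity $\mu_\KK(M):=\inf\{\Nl\Delta\Nr:\Delta\in\KK^{m\times k},\ \det(I-\Delta M)=0\}$ for the matrices $M=G(z)$.
\begin{itemize}
\item For $\KK=\CC$, the classical rank-one construction $\Delta=\Nl M\Nr^{-2}\,vu^*$, built from a top singular pair $Mv=\Nl M\Nr u$, gives $\mu_\CC(M)=\Nl M\Nr^{-1}$.
\item By the maximum modulus principle for the matrix-valued function $G$, which is analytic outside $\DD$ and tends to $D$ at infinity, we have $\Nl D\Nr\le\max_{\partial\DD}\Nl G\Nr$. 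So the $z=\infty$ term is absorbed, and continuity of $G$ on the compact set $\partial\DD$ turns the supremum into a maximum. This gives the complex formula.
\end{itemize}

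For $\KK=\RR$ (the matrices $A,B,C,D$ are then real), the real perturbation value $\mu_\RR$ is needed. Here I will invoke the formula of Qiu et al.\ for the real structured singular value:
\begin{itemize}
\item in general, $\mu_\RR(M)^{-1}=\inf_{\gamma\in(0,1]}\sigma_2\left[\begin{smallmatrix}\Re M&-\gamma\Im M\\ \gamma^{-1}\Im M&\Re M\end{smallmatrix}\right]$;
\item for $m=1$ or $k=1$ this reduces to $\dist(\Re M,\RR\,\Im M)$, by direct computation for vectors.
\end{itemize}
At $z=\infty$, the matrix $G(\infty)=D$ is real, so the infimum over $\gamma$ equals $\Nl D\Nr$, and this term must be kept explicitly, because the maximum modulus argument fails for the real value. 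Taking the supremum over $z\in\partial\DD\cup\{\infty\}$ yields the stated formulas.

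I expect the main obstacle to be justifying that the outer supremum over $z\in\partial\DD$ is attained as a maximum. The map $z\mapsto\mu_\RR(G(z))^{-1}$ need not be continuous: it can jump at points where $\Im G(z)=0$, i.e.\ at $z=\pm1$. I would handle this with upper semicontinuity of the Qiu expression (an infimum of continuous functions) on the compact set $\partial\DD$. Separately, the convention $1/\infty=0$ has to be checked for the degenerate case where all the quantities vanish.
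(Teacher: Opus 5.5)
Your proposal is correct and follows the paper's route. Like the paper, you reduce, via the equivalence between \eqref{eq:WN-P0-pert} and the finite-dimensional system \eqref{eq:fin-dim-syst-pert} (Proposition~\ref{prop:MainresStab} and Theorem~\ref{thm:StabRadiEq}), to the finite-dimensional stability radius; the paper then simply cites this from the literature (Proposition~\ref{prop:StabRadiFin}), while you re-derive it through the determinant identity, the homotopy argument, the maximum modulus principle and the Qiu et al.\ formula.

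There is one small slip in your complex rank-one step: with unit vectors $u,v$ and $Mv=\Nl M\Nr u$, the destabilizing perturbation is $\Nl M\Nr^{-1}vu^*$ (equivalently $\Nl M\Nr^{-2}v(Mv)^*$), not $\Nl M\Nr^{-2}vu^*$. Also, for non-Euclidean norms in the complex formula, $u^*$ must be replaced by a norming dual functional of $u$.
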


Note that the relation $r_\CC\eqref{eq:WN-P0}\leq r_\RR\eqref{eq:WN-P0}$ always holds. 
However, Example \ref{sec:Ex-nice} shows that equality does not necessarily hold in general.
 Therefore it is worth to distinguish between the real and complex stability radius. Furthermore, it can be shown that the infimum in Definition \ref{def:stab_radius} is actually a minimum. Moreover, we can construct a perturbation with minimal norm which destabilizes \eqref{eq:WN-P0-pert} based on the matrices defined in \eqref{eq:Mat}. This perturbation often can be constructed with rank 1. Further details are highlighted in Remark \ref{rem:MinNormPertINFITY}.

\subsection*{Outline of this paper}
This paper is organized as follows. In Section \ref{sec:Well-posedness}, we review known results on well-posedness and transform the system so as to eliminate the matrix function $P_0$ in \eqref{eq:WN-P0}. Our approach to analysing \eqref{eq:WN-P0} is to reformulate the system as an equivalent infinite-dimensional discrete-time system, as carried out in Section \ref{sec:DiscreteAppr}. One of the main advantages of this approach is that the operators of the discrete-time system are given by multiplication operators with constant matrices. Our ansatz is to consider a finite-dimensional discrete-time defined with those matrices. The results concerning the stability and its robustness for the original system are then derived from the same types of results for the finite-dimensional system. Therefore we give a short introduction into linear finite-dimensional discrete-time systems in Section \ref{sec:IntrFinDimSyst} and apply this to our network of waves in Section \ref{sec:FinAppr}, where we also prove the aforementioned main results. In addition we use the discrete-time expression to calculate the dual system and use the finite-dimensional approach to state some matrix conditions for controllability and observability, see Section \ref{sec:DualAndContrObs}. Several examples are presented in Section \ref{sec:Ex}, while Section \ref{sec:Persp} is dedicated to the perspectives.

\section{Well-posedness and simplification}\label{sec:Well-posedness}
In this section we summarize known results on well-posedness of \eqref{eq:WN-P0}. First we eliminate the matrix function $P_0$ in \eqref{eq:WN-P0} via variables following the construction in~\cite[Lem.~2.1 and 2.2]{HasA2025a}.
\begin{lemma}\label{lem:ZustTrafo} 
The initial value problems 
\begin{align}
\begin{split}
Q'(\xi)&=-Q(\xi)P_0(\xi), \quad\, \xi\in(0,1],  Q(0)=I, \\
P'(\xi)&=P_0(\xi)P(\xi), \qquad \xi\in(0,1],  P(0)=I 
\label{eq:ODE_Q_P}
\end{split}
\end{align}
have unique Carathéodory-solutions $Q$ and $P$, i.e., $Q$ and $P$ are absolutely continuous on $[0,1]$ and fulfill the initial value problem almost everywhere. For every $\xi \in [0,1]$ the matrices $Q(\xi)$ and $P(\xi)$ are invertible with \begin{align*}
Q(\xi) = P(\xi)^{-1}.
\end{align*}
\end{lemma}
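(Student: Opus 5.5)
The plan is to invoke the standard Carathéodory existence and uniqueness theory for linear ODEs with integrable coefficients, and then prove the inverse relation by differentiating the product $Q(\xi)P(\xi)$.

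\textbf{Existence and uniqueness.} Both problems in \eqref{eq:ODE_Q_P} are linear matrix ODEs. Their coefficients act as $X\mapsto P_0(\xi)X$ and $X\mapsto -XP_0(\xi)$ on $\KK^{n\times n}$. Since $P_0\in\L^\infty([0,1];\KK^{n\times n})\subset \L^1$, each right-hand side is measurable in $\xi$, linear in the unknown, and globally Lipschitz with Lipschitz constant $\|P_0(\xi)\|$, which is integrable.

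I would rewrite each problem as the Volterra integral equation, e.g.
\begin{align*}
P(\xi)=I+\int_0^\xi P_0(s)P(s)\,\d s ,
\end{align*}
on $\C([0,1];\KK^{n\times n})$. A Picard iteration then gives the result. The $j$-th difference is bounded by $\frac{1}{j!}\bigl(\int_0^\xi\|P_0\|\bigr)^j$, so the iteration converges uniformly on all of $[0,1]$; there is no blow-up because the equation is linear. Uniqueness follows from Gronwall's inequality applied to the difference of two solutions. The resulting solution is absolutely continuous, since it is an integral of an $\L^1$ function, and satisfies the ODE almost everywhere. The same argument works verbatim for $Q$.

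\textbf{The inverse relation.} Set $R(\xi):=Q(\xi)P(\xi)$. A product of absolutely continuous functions on a compact interval is absolutely continuous, and the product rule holds almost everywhere. Hence, for almost every $\xi$,
\begin{align*}
R'(\xi)=Q'(\xi)P(\xi)+Q(\xi)P'(\xi)=-Q(\xi)P_0(\xi)P(\xi)+Q(\xi)P_0(\xi)P(\xi)=0 .
\end{align*}
An absolutely continuous function with derivative zero almost everywhere is constant, so $R(\xi)=R(0)=I$ for all $\xi$. Since the matrices are square, $QP=I$ already implies that both $Q(\xi)$ and $P(\xi)$ are invertible with $Q(\xi)=P(\xi)^{-1}$.

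The only point needing a little care is justifying the product rule and the "zero derivative implies constant" step in the absolutely continuous (rather than $C^1$) setting. Both are standard facts about absolutely continuous functions. The remaining steps are routine.
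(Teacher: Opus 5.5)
Your proof is correct. Picard iteration with the factorial bound, together with Gronwall, gives the unique absolutely continuous solutions; then $(QP)'=0$ almost everywhere with $Q(0)P(0)=I$ yields $Q(\xi)P(\xi)=I$, hence $Q(\xi)=P(\xi)^{-1}$ because the matrices are square. The paper gives no argument of its own and only cites \cite[Lem.~2.1 and 2.2]{HasA2025a}, so your proposal supplies a complete, self-contained proof of the standard kind that citation stands in for.
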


With the solutions of \eqref{eq:ODE_Q_P} we can transform \eqref{eq:WN-P0} and eliminate  $P_0$.
\begin{proposition}[Simplification]\label{prop:Simpl}
The system \eqref{eq:WN-P0} is equivalent to the boundary controlled PDEs
\begin{align}\label{eq:WN-Simp}\begin{split}
\frac{\partial \tilde{x}}{\partial t}(\xi,t) &= -\frac{\partial}{\partial \xi}(\lambda_0(\xi)\tilde{x}(\xi,t)), \quad t\geq 0,\xi\in [0,1],\quad \tilde{x}(\xi,0) = \tilde{x}_0(\xi),  \quad \xi\in [0,1], \\
\left[\begin{smallmatrix}0\\ I\end{smallmatrix}\right]u(t) &= - K\lambda_0(0)\tilde{x}(0,t) -  L P(1)\lambda_0(1)\tilde{x}(1,t), \quad  t\geq 0,\\ 
y(t) &= - K_y\lambda_0(0) \tilde{x}(0,t) -  L_y P(1)\lambda_0(1) \tilde{x}(1,t),  \quad t\geq 0,
\end{split}
\end{align}
with initial condition $\tilde{x}_0\in \L^2([0,1];\KK^{n})$, control $u(t)\in\KK^m$ and observation $y(t)\in\KK^k$, where $x$ and $\tilde{x}$ satisfy the relation 
\begin{align*}
\tilde{x}(\xi,t)=Q(\xi)x(\xi,t) \quad \text{ and } \quad {x}(\xi,t)=P(\xi)\tilde{x}(\xi,t),
\end{align*} with $Q$ and $P$ given by  \eqref{eq:ODE_Q_P}.  
\end{proposition}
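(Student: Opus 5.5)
We verify Proposition~\ref{prop:Simpl} by direct substitution, using Lemma~\ref{lem:ZustTrafo}. Let $x$ be a (classical or mild) solution of \eqref{eq:WN-P0} and set $\tilde{x}(\xi,t):=Q(\xi)x(\xi,t)$. Since $Q$ is absolutely continuous and $Q(\xi)$ is invertible with $Q(\xi)^{-1}=P(\xi)$, multiplication by $Q$ is a bounded, boundedly invertible operator on $\L^2([0,1];\KK^n)$. It also maps absolutely continuous functions to absolutely continuous functions. Hence the map $x\mapsto \tilde x$ is an isomorphism of the state space that preserves the domain regularity needed to make sense of $\frac{\partial}{\partial\xi}(\lambda_0 x)$.

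\textbf{Step 1 (PDE).} For smooth enough $x$ (say $\lambda_0 x(\cdot,t)$ absolutely continuous), the product rule together with $Q'=-QP_0$ a.e.\ gives
\begin{align*}
\frac{\partial}{\partial\xi}\bigl(\lambda_0\tilde x\bigr)=\frac{\partial}{\partial\xi}\bigl(Q\lambda_0 x\bigr)=-QP_0\lambda_0 x+Q\frac{\partial}{\partial\xi}(\lambda_0 x).
\end{align*}
Therefore
\begin{align*}
\frac{\partial\tilde x}{\partial t}=Q\frac{\partial x}{\partial t}=-Q\frac{\partial}{\partial\xi}(\lambda_0 x)+QP_0\lambda_0 x=-\frac{\partial}{\partial\xi}(\lambda_0\tilde x).
\end{align*}
The computation runs in reverse using $x=P\tilde x$ and $P'=P_0P$, so the two PDEs are equivalent.

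\textbf{Step 2 (boundary conditions, output, initial data).} Since $Q(0)=P(0)=I$, we have $x(0,t)=\tilde x(0,t)$ and $x(1,t)=P(1)\tilde x(1,t)$. Substituting these into the input and output equations of \eqref{eq:WN-P0} yields exactly the boundary and output relations of \eqref{eq:WN-Simp}. The initial data correspond via $\tilde x_0=Qx_0$.

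\textbf{Step 3 (meaning of equivalence).} It remains to make precise that solutions correspond bijectively in the sense of well-posedness. Let $\mathcal A$ be the operator of \eqref{eq:WN-P0} and $\tilde{\mathcal A}$ that of \eqref{eq:WN-Simp}, with their boundary-condition domains. Let $\mathcal Q$ denote multiplication by $Q$. Steps 1--2 show $\mathcal Q D(\mathcal A)=D(\tilde{\mathcal A})$ and $\tilde{\mathcal A}=\mathcal Q\mathcal A\mathcal Q^{-1}$. The same relations hold for the boundary input and output maps. Similarity then transfers semigroup generation, mild solutions, continuity in $(x_0,u)$, and $\L^2_{\mathrm{loc}}$ outputs in both directions.

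The main obstacle is the regularity bookkeeping in Step 3. $P_0$ is only $\L^\infty$, so $Q$ is merely $W^{1,\infty}$. One must check that multiplication by $Q$ preserves the domain, i.e.\ functions $f$ with $\lambda_0 f\in \H^1$, and that the product rule holds almost everywhere. This follows because $W^{1,\infty}$ multiplies $\H^1$ into $\H^1$. With that in hand, classical solutions correspond, and mild solutions follow by density and the boundedness of $\mathcal Q^{\pm1}$.
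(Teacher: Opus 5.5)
Your proof is correct. It is the change of variables $\tilde{x}=Qx$ with $Q'=-QP_0$, $Q(0)=I$, $Q^{-1}=P$, which the paper takes from \cite[Lem.~2.1 and 2.2]{HasA2025a} without giving a separate proof: the product rule cancels the $P_0\lambda_0 x$ term, and $x(0,t)=\tilde{x}(0,t)$, $x(1,t)=P(1)\tilde{x}(1,t)$ give the boundary and output relations. Your Step~3 shows that multiplication by the $W^{1,\infty}$ matrix function $Q$ preserves $Z$ and conjugates the generator, input map and output map, which makes explicit the well-posedness bookkeeping the paper leaves implicit.
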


A first important question related to boundary controlled, boundary observed systems is well-posedness, which we explain in the following definition. For a general introduction, we refer the reader to e.g.~\cite[Chap.~13]{JacB2012b}.
 \begin{definition}\label{defi:well-posed}
     The system \eqref{eq:WN-Simp} is said to be a \emph{well-posed} boundary controlled system, if for all $t>0$ there exists $m_{t}>0$ such that for all initial values $\tilde{x}_0\in \left\{\tilde{x}\in\L^2([0,1];\KK^n):\lambda_0\tilde{x}\in \H^1([0,1],\KK^n)\right\}=:Z$ and input functions $u\in \C^2([0,t];\KK^m)$  with $\left[\begin{smallmatrix}
             0\\ u(0)
         \end{smallmatrix}\right]=-K (\lambda_0(0){\tilde{x}_0})(0)-LP(1)(\lambda_0(0){\tilde{x}_0})(1),$ we can define unique solutions $\tilde{x}\in \C^1([0,t];\L^2([0,1];\KK^n))$ with $\tilde{x}(t)\in Z$ and $y\in\C([0,t],\KK^k)$ of \eqref{eq:WN-Simp} and we have         \begin{align}\label{eq:Well-posedness-Ineq}
\Nl \tilde{x}(t) \Nr_{\L^2([0,1]; \KK^n)}^2 + \int_0^{t} \Nl y(s) \Nr_{\KK^k}^2 \d{}s \leq m_{t} \left( \Nl \tilde{x}_0 \Nr_{\L^2([0,1]; \KK^n)}^2 +\int_0^{t} \Nl u(s) \Nr_{\KK^m}^2 \d{}s \right).
 \end{align}
 \end{definition}
\begin{remark}\begin{itemize}
\item It would be sufficient to ask that \eqref{eq:Well-posedness-Ineq} holds for one $t>0$ to infer that it is true for all $t>0$, see \cite[Thm.~13.1.7]{JacB2012b}.
\item Setting $u\equiv0$ in \eqref{eq:Well-posedness-Ineq} gives $\Nl \tilde{x}(t)\Nr_{\L^2([0,1]; \KK^n)}\leq m_t \Nl \tilde{x}_0\Nr_{\L^2([0,1]; \KK^n)}$, which is equivalent to well-posedness of the homogeneous system, that is, the operator \begin{align}\label{eq:HomoOper}
                \mathcal{A}f = -\frac{\d}{\d\xi}(\lambda_0 f),\quad
                D(\mathcal{A}) = \left\{f\in Z, 
                K(\lambda_0 f)(0) = -LP(1)(\lambda_0 f)(1) \right\}   
            \end{align} generates a strongly continuous semigroup.
            \item Choosing $u\equiv0$ and $\tilde{x}_0=0$ in \eqref{eq:Well-posedness-Ineq} gives $\int_0^{t} \Nl y(s) \Nr_{\KK^k}^2 \d{}s \leq m_{t}  \Nl \tilde{x}_0 \Nr_{\L^2([0,1]; \KK^n)}^2 $ and $\Nl \tilde{x}(t) \Nr_{\L^2([0,1]; \KK^n)}^2  \leq m_{t} \int_0^{t} \Nl u(s) \Nr_{\KK^m}^2 \d{}s$, respectively. Those inequalities reflect the admissibility of the output and input operators, respectively. For more details on admissibility of unbounded operators, we refer to \cite{TucM2009b}.
    \item The inequality \eqref{eq:Well-posedness-Ineq} shows that the mapping from $\left[\begin{smallmatrix}\tilde{x}_0\\u\end{smallmatrix}\right]$ to $\left[\begin{smallmatrix}\tilde{x}\\y\end{smallmatrix}\right]$ is bounded and it is densely defined in $\L^2([0,1];\KK^n)\times \L^2([0,t];\KK^m)$. Therefore it can be extended continuously to a mapping from $\L^2([0,1];\KK^n)\times \L^2([0,t];\KK^m)$. This extension leads to the concept of \emph{mild solution}. 
\end{itemize}
\end{remark}
Well-posedness for \eqref{eq:WN-P0} is addressed in the following proposition, see \cite[Thm.~3.3]{ZwaH2010a}.

\begin{proposition}\label{satz:well-posedness}
The system \eqref{eq:WN-Simp} (equivalently \eqref{eq:WN-P0}) is well-posed if and only if $K$ is invertible. Especially, if $K$ is not invertible, the operator describing the homogeneous dynamics, that is, \eqref{eq:HomoOper},
does not generate a strongly continuous semigroup.
\end{proposition}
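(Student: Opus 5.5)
We want to prove that the system \eqref{eq:WN-Simp} is well-posed if and only if $K$ is invertible, and that for singular $K$ the operator \eqref{eq:HomoOper} is not a generator. The plan is to reduce everything to the transport equation along characteristics.

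\emph{Characteristic coordinates.} Introduce
\[
\tau(\xi)=\int_0^\xi \lambda_0(s)^{-1}\,\d s, \qquad \tau_1:=\tau(1),
\]
and the new unknown $z(\xi,t)=\lambda_0(\xi)\tilde{x}(\xi,t)$. Since $\lambda_0,\lambda_0^{-1}\in\L^\infty$, both $\tau$ and $\tau^{-1}$ are bi-Lipschitz, and $\tilde{x}\mapsto\lambda_0\tilde{x}$ is an isomorphism of $\L^2$. Moreover $Z$ is mapped onto $\H^1$. In these coordinates, the PDE $\partial_t\tilde{x}=-\partial_\xi(\lambda_0\tilde{x})$ becomes the pure transport $\partial_t w+\partial_s w=0$ on $[0,\tau_1]$ with the weighted $\L^2$ norm, whose weights are bounded above and below.

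\emph{Sufficiency.} Assume $K$ is invertible. The boundary condition then reads
\[
w(0,t)=-K^{-1}LP(1)\,w(\tau_1,t)-K^{-1}\left[\begin{smallmatrix}0\\I\end{smallmatrix}\right]u(t)=Aw(\tau_1,t)+Bu(t),
\]
so the inflow is an explicit function of the outflow and the input. The solution is built by the method of characteristics on consecutive time windows of length $\tau_1$: on $[0,\tau_1]$ the state is the shifted initial datum plus the inflow, and one iterates from there. Energy estimates follow directly:
\begin{itemize}
\item the $\L^2$ norm of the state at time $t$ is bounded by the $\L^2$ norms of $w_0$ and of the boundary trace $w(\tau_1,\cdot)$ on $[0,t]$;
\item the trace on $[0,t]$ is in turn controlled by $\|w_0\|$ and $\int_0^t\|u\|^2$ after finitely many windows, with constants involving $\|A\|,\|B\|$.
\end{itemize}
The output $y=-K_y w(0,t)-L_yP(1)w(\tau_1,t)$ is a combination of these same traces, which yields \eqref{eq:Well-posedness-Ineq}. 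For smooth, compatible data the classical regularity $\tilde{x}\in\C^1([0,t];\L^2)$ with $\tilde{x}(t)\in Z$ follows from the usual compatibility argument. Alternatively, one can verify the hypotheses of the port-Hamiltonian well-posedness theorem \cite[Thm.~13.1.7]{JacB2012b} together with the generation result for $\mathcal{A}$.

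\emph{Necessity; this is the main obstacle.} Suppose $K$ is singular. It suffices to show that $\mathcal{A}$ from \eqref{eq:HomoOper} does not generate a $C_0$-semigroup. The key step is a resolvent computation. Solving $(\mu-\mathcal{A})f=g$ gives
\[
(\lambda_0 f)(\xi)=\mathrm{e}^{-\mu\tau(\xi)}(\lambda_0f)(0)+\int_0^\xi \mathrm{e}^{-\mu(\tau(\xi)-\tau(s))}g(s)\,\d s .
\]
Inserting this into the boundary condition yields the linear system
\[
\big(K+LP(1)\mathrm{e}^{-\mu\tau_1}\big)(\lambda_0 f)(0)=-LP(1)\int_0^1\mathrm{e}^{-\mu(\tau_1-\tau(s))}g(s)\,\d s .
\]
As $\Re\mu\to\infty$ the matrix $K+LP(1)\mathrm{e}^{-\mu\tau_1}$ tends to the singular matrix $K$. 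Two cases arise.
\begin{itemize}
\item If $\det(K+LP(1)\mathrm{e}^{-\mu\tau_1})$ vanishes identically in $\mu$, then every $\mu$ lies in the spectrum, which contradicts generation.
\item Otherwise, expand the determinant as a polynomial in $\mathrm{e}^{-\mu\tau_1}$. Either its zeros accumulate in every right half-plane, or the inverse grows like $\mathrm{e}^{j\Re\mu\,\tau_1}$ for some $j\ge1$. In the latter case, choose $g$ concentrated near $\xi=1$ and a vector $v$ chosen via $\ker K$ so that $\|(\mu-\mathcal{A})^{-1}g\|$ grows exponentially in $\Re\mu$. This violates the Hille--Yosida bound $\|(\mu-\mathcal{A})^{-1}\|\le M/(\Re\mu-\omega)$.
\end{itemize}
Making this dichotomy precise is the delicate part of the argument. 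As a fallback, one can cite \cite[Thm.~3.3]{ZwaH2010a} directly.
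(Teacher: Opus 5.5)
The paper gives no argument of its own for this statement; it cites \cite[Thm.~3.3]{ZwaH2010a}, so your fallback is exactly the paper's proof. Your self-contained route is different. Its sufficiency half is fine. With $w=\lambda_0\tilde{x}$ in characteristic coordinates the boundary condition becomes $w(0,t)=Aw(\tau_1,t)+Bu(t)$. Iterating over windows of length $\tau_1=p(1)$ is the construction behind Proposition~\ref{satz:Diskretisierung}, and \eqref{eq:Well-posedness-Ineq} follows from it.

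The necessity half, however, is not yet a proof.

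\emph{The first branch of your dichotomy never occurs.} If $\det(K+LP(1)z)\not\equiv 0$, this polynomial has finitely many nonzero roots $z_r$. The eigenvalues of $\mathcal{A}$ therefore lie on the finitely many lines $\Re\mu=-\ln|z_r|/\tau_1$ and never accumulate to the right. For instance, $K=\mathrm{diag}(1,0)$ and $LP(1)=\mathrm{diag}(0,1)$ give $\sigma(\mathcal{A})=\emptyset$. Everything therefore rests on resolvent growth.

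\emph{In the second branch you control the wrong quantity.} Blow-up of $(K+LP(1)\mathrm{e}^{-\mu\tau_1})^{-1}$ does not by itself give blow-up of $(\mu-\mathcal{A})^{-1}$, for two reasons:
\begin{itemize}
\item the inverse acts on $LP(1)\int_0^1\mathrm{e}^{-\mu(\tau_1-\tau(s))}g(s)\,\d s$, a vector in $\ran LP(1)$ of size $O(\mu^{-1/2}\|g\|)$;
\item the resulting boundary value $(\lambda_0 f)(0)$ is then damped by $\mathrm{e}^{-\mu\tau(\xi)}$.
\end{itemize}
You assert that a suitable $g$ and ``$v$ chosen via $\ker K$'' beat these factors, but that is exactly the step left undone. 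The claim is true. Writing $M=LP(1)$, the matrix $(K+zM)^{-1}M$ is unbounded as $z\to0$. Otherwise $(K+zM)^{-1}K=I-z(K+zM)^{-1}M\to I$, which is impossible because the left side has rank $\le\rank K<n$.

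A cleaner way to close the gap is to use your $\ker K$ idea directly as an approximate eigenvector, with no determinant at all. Take $0\neq v\in\ker K$ and $\phi\in\C^1([0,1])$ with $\phi=1$ on $[0,\frac12]$ and $\phi(1)=0$. For real $\mu>0$ put $f_\mu:=\lambda_0^{-1}\mathrm{e}^{-\mu\tau}\phi v$. Then $(\lambda_0f_\mu)(0)=v$ and $(\lambda_0f_\mu)(1)=0$, so $f_\mu\in D(\mathcal{A})$. A direct computation gives
\[
(\mu-\mathcal{A})f_\mu=\mathrm{e}^{-\mu\tau}\phi' v,\qquad \|(\mu-\mathcal{A})f_\mu\|\le C\mathrm{e}^{-\mu\tau(1/2)},\qquad \|f_\mu\|\ge c\,\mu^{-1/2}.
\]
Every generator satisfies $\|f\|\le\frac{M}{\mu-\omega}\|(\mu-\mathcal{A})f\|$ for $f\in D(\mathcal{A})$ and real $\mu>\omega$. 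Since $\tau(1/2)>0$, the estimates above contradict this for large $\mu$. The argument works verbatim for $\KK=\RR$.
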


\begin{corollary}\label{lem:Well-Posedness_Cl}
Let $\Delta\in\KK^{m\times k}$. \eqref{eq:WN-P0-pert}
is well-posed if and only if $I - \Delta D$ is invertible.
\end{corollary}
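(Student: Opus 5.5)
The plan is to reduce Corollary~\ref{lem:Well-Posedness_Cl} to Proposition~\ref{satz:well-posedness}, which applies to any system of the form \eqref{eq:WN-P0}. First we observe that \eqref{eq:WN-P0-pert} has exactly this form with input $u\equiv 0$ and boundary matrices
\[
K_\Delta := K-\left[\begin{smallmatrix}0\\I\end{smallmatrix}\right]\Delta K_y,\qquad L_\Delta := L-\left[\begin{smallmatrix}0\\I\end{smallmatrix}\right]\Delta L_y .
\]
Well-posedness of the closed loop means that the operator in \eqref{eq:HomoOper}, with $K,L$ replaced by $K_\Delta,L_\Delta$, generates a strongly continuous semigroup. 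By the second statement of Proposition~\ref{satz:well-posedness} (after the transformation of Proposition~\ref{prop:Simpl}), this holds if and only if $K_\Delta$ is invertible.

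It therefore remains to show that $K_\Delta$ is invertible if and only if $I-\Delta D$ is invertible. Since $K$ is invertible by the standing assumption, we can factor
\[
K_\Delta = K\Bigl(I - K^{-1}\left[\begin{smallmatrix}0\\I\end{smallmatrix}\right]\Delta K_y\Bigr).
\]
Thus $K_\Delta$ is invertible exactly when $I_n - XY$ is invertible, where $X=K^{-1}\left[\begin{smallmatrix}0\\I\end{smallmatrix}\right]\in\KK^{n\times m}$ and $Y=\Delta K_y\in\KK^{m\times n}$. By the standard identity $\det(I_n-XY)=\det(I_m-YX)$ (Sylvester's determinant theorem), this is equivalent to invertibility of
\[
I_m - \Delta K_y K^{-1}\left[\begin{smallmatrix}0\\I\end{smallmatrix}\right] = I-\Delta D,
\]
using the definition of $D$ in \eqref{eq:Mat}.

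The only delicate point is the first step. We must make sure that well-posedness of the closed loop is understood as generation of the homogeneous semigroup, and that Proposition~\ref{satz:well-posedness} really characterizes generation of the operator \eqref{eq:HomoOper} with arbitrary boundary matrices, in both directions. Its phrasing, ``if $K$ is not invertible, \eqref{eq:HomoOper} does not generate'', gives the necessity direction. Sufficiency follows from the well-posedness statement, since well-posedness implies that the homogeneous operator is a generator. Once this is settled, the rest is the elementary determinant identity above.
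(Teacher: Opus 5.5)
Your proposal is correct and follows essentially the same route as the paper. Both arguments apply Proposition~\ref{satz:well-posedness} to the closed-loop boundary matrix $K-\left[\begin{smallmatrix}0\\I\end{smallmatrix}\right]\Delta K_y$, factor out the invertible $K$, and use the equivalence between invertibility of $I_n-XY$ and of $I_m-YX$ to reach $I-\Delta D$. The differences are only cosmetic: you factor $K$ out on the left where the paper factors it out on the right, and you name Sylvester's determinant identity where the paper just says ``it can be shown.'' Your reading of closed-loop well-posedness as semigroup generation matches the remark the paper places right after the corollary.
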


\begin{proof}
    Thanks to Proposition \ref{satz:well-posedness}, \eqref{eq:WN-P0-pert} is well-posed if and only if $\left[\begin{smallmatrix}0\\ I\end{smallmatrix}\right]\Delta K_y-K$ is invertible, which is also equivalent to the invertibility of $I-\left[\begin{smallmatrix}0\\ I\end{smallmatrix}\right]\Delta K_yK^{-1}$. Moreover, it can be shown that $I-\left[\begin{smallmatrix}0\\ I\end{smallmatrix}\right]\Delta K_yK^{-1}$ is invertible if and only if $I-\Delta K_yK^{-1}\left[\begin{smallmatrix}0\\ I\end{smallmatrix}\right]$ is invertible. The proof concludes by noting that $I-\Delta K_yK^{-1}\left[\begin{smallmatrix}0\\ I\end{smallmatrix}\right] = I-\Delta D$.
\end{proof}

\begin{remark}
    \begin{itemize}
        \item Proposition \ref{satz:well-posedness} also implies that the homogeneous closed-loop system composed of \eqref{eq:WN-Simp} with $u(t) = \Delta y(t)$ is well-posed if and only if $I-\Delta D$ is invertible, which is equivalent to the semigroup generation of the operator $(\mathcal{A}_\mathrm{cl},D(\mathcal{A}_\mathrm{cl}))$ defined by
        \begin{subequations}
            \begin{align*}
                \mathcal{A}_{\mathrm{cl}}f &= -\frac{\d}{\d\xi}(\lambda_0 f),\quad
                D(\mathcal{A}_\mathrm{cl}) = \left\{f\in Z,\left(K-\left[\begin{smallmatrix}0\\ I\end{smallmatrix}\right]\Delta K_y\right)(\lambda_0 f)(0) = -  \left(L-\left[\begin{smallmatrix}0\\ I\end{smallmatrix}\right]\Delta L_y\right) P(1)(\lambda_0 f)(1) \right\}.
            \end{align*}
        \end{subequations}
        \item The invertibility of $I-\Delta D$ has already been shown a necessary condition for the well-posedness of the closed-loop system in \cite[Prop.~4.6]{WeiG1994a}. In Lemma \ref{lem:Well-Posedness_Cl}, we actually show that this condition is also sufficient for our class of systems. 
    \end{itemize}
\end{remark}
We conclude this section by expressing \eqref{eq:WN-Simp} in an equivalent form using the matrices $A$, $B$, $C$  and $D$ defined in \eqref{eq:Mat}. The invertibility of $K$ allows us to express \eqref{eq:WN-Simp} as
\begin{align}\tag{\text{$\Sigma_0$}}\label{eq:WN-0}
\begin{split}
\frac{\partial \tilde{x}}{\partial t}(\xi, t) &= -\frac{\partial}{\partial \xi} \left( \lambda_0(\xi)\tilde{x}(\xi, t)\right),   \quad t\geq 0, \xi\in [0,1],\quad \tilde{x}(\xi,0)= \tilde{x}_0(\xi),  \quad\xi\in [0,1], \\
B u(t)&=\lambda_0(0)\tilde{x}(0, t)-A\lambda_0(1)\tilde{x}(1,t),  \quad t\geq 0,\\
y(t)&=C\lambda_0(1)\tilde{x}(1, t)+D u(t), \quad t\geq 0.
\end{split}
\end{align}

For the robustness analysis, we consider the closed-loop system obtained by interconnecting\eqref{eq:WN-0} with $u(t)=\Delta y(t)$, which, under the invertibility of $I-\Delta D$, reads
\begin{align}\tag{\text{$\Sigma_0^\Delta$}}\label{eq-WN-0-pert}
\begin{split}
\frac{\partial \tilde{x}}{\partial t}(\xi, t) &= -\frac{\partial}{\partial \xi} \left( \lambda_0(\xi)\tilde{x}(\xi, t)\right),   \quad t\geq 0, \xi\in [0,1],\quad \tilde{x}(\xi,0)= \tilde{x}_0(\xi),  \quad\xi\in [0,1], \\
0&=\lambda_0(0)\tilde{x}(0, t)-\left(A +B (I-\Delta D)^{-1}\Delta C \right)\lambda_0(1)\tilde{x}(1,t),  \quad t\geq 0.
\end{split}  
\end{align}

\section{Equivalent discrete-time formulation}\label{sec:DiscreteAppr}
In this section we show that the network of waves \eqref{eq:WN-0} can be expressed equivalently as a discrete-time system. After that, we define the concepts of stability on the discrete-time level and show that they are equivalent to the same concepts in the continuous-time level. We start by defining the functions $k:[0,1]\to [0,1]$ and $p:[0,1]\to [0,\infty)$ by
\begin{align}\label{lem:Fctkp}
k(\xi) := 1-\frac{p(\xi)}{p(1)}, \quad p(\xi) := \int_0^\xi \frac{1}{\lambda_0(\zeta)}\d\zeta, \quad \xi\in[0,1],
\end{align}
respectively. It follows from basic calculus that $k$ is a homeomorphism.

The following proposition, based on \cite[Lem.~3.1]{HasA2025a}, provides an equivalent discrete-time representation of the system \eqref{eq:WN-0}. The proof is presented in a slightly different form than in \cite[Lem.~3.1]{HasA2025a} and is included here for completeness.

\begin{nota}
For a matrix $J\in\KK^{j\times l}$ we denote by $M_J$ the associated multiplication operator\linebreak $M_J: \L^2([0,1]; \KK^l) \to \L^2([0,1]; \KK^j), \quad x\mapsto Jx$.   
\end{nota}

\begin{proposition}\label{satz:Diskretisierung}
We consider the system \eqref{eq:WN-0} with matrices $A,B,C$ and $D$ be defined as in \eqref{eq:Mat}. Further let the functions $k$ and $p$ be defined as in \eqref{lem:Fctkp}.
Then \eqref{eq:WN-0} is equivalent to the infinite-dimensional discrete-time system
\begin{align}\tag{\text{$\Sigma_d$}}\label{eq:WN-disc}\begin{split}
x_d(\tau+1) &= M_{A} x_d(\tau) + M_{B} u_d(\tau),\quad \tau\in\NN_0,\quad x_d(0) = x_{d,0},\\
y_d(\tau) &= M_{C} x_d(\tau) + M_{D} u_d(\tau), \quad \tau\in\NN_0,
\end{split}
\end{align}
with state space $\L^2([0,1];\KK^n)$, initial condition $x_{d,0}=\lambda_0(k^{-1}(\cdot))\tilde{x}_0(k^{-1}(\cdot))$, inputs $u_d(\tau)\in \L^2([0,1];\KK^m)$ and outputs $y_d(\tau) \in  \L^2([0,1];\KK^k)$, $\tau\in\NN_0$. 

The inputs and the outputs of \eqref{eq:WN-0} and \eqref{eq:WN-disc} are related as follows:
\begin{align*}
u((\tau+\zeta)p(1))=u_d(\tau)(\zeta), \quad \tau\in\NN_0,\zeta\in [0,1] ,\\
y((\tau+\zeta)p(1))=y_d(\tau)(\zeta), \quad \tau\in\NN_0,\zeta\in [0,1].
\end{align*}
If we define $f$ on $[0,\infty)$ by $f(\tau+\zeta):=x_d(\tau)(\zeta), \tau\in\NN_0,\zeta\in [0,1]$, we obtain \begin{align*}
\tilde{x}(\xi,t)=\lambda_0(\xi)^{-1}f\left(k(\xi)+ p(1)^{-1}t \right), \quad t\geq0, \xi\in [0,1].
\end{align*}

We notice that for every $t>0$ there exist a $\tau\in\NN_0$ and a $\zeta\in[0,1]$ such that $t=\tau+\zeta$. Thus, we have a unique relation between both systems.

\end{proposition}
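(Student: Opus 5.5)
The proof rests on the method of characteristics, with the travel-time coordinate $p$ built into $k$. First I verify that for smooth data the characteristic curves of $\partial_t\tilde x = -\partial_\xi(\lambda_0\tilde x)$ are the curves along which $p(\xi)-t$ is constant. Along each such curve the quantity $w:=\lambda_0\tilde x$ is transported unchanged. Indeed, $\partial_t w = \lambda_0\partial_t\tilde x = -\lambda_0\partial_\xi w$, so $w$ solves $\partial_t w+\lambda_0\partial_\xi w=0$. Since $\frac{\d}{\d t}\xi(t)=\lambda_0(\xi(t))$ is the same as $\frac{\d}{\d t}p(\xi(t))=1$, every solution has the form $w(\xi,t)=g(p(\xi)-t)$ for a single function $g$ on $[-p(1),\infty)$.

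Next I rescale time by $p(1)$. Define $f(s):=g(p(1)(s-1))$, which gives $w(\xi,t)=f\bigl(1-p(\xi)/p(1)+t/p(1)\bigr)=f(k(\xi)+p(1)^{-1}t)$. This is exactly the claimed representation $\tilde x(\xi,t)=\lambda_0(\xi)^{-1}f(k(\xi)+p(1)^{-1}t)$. At $t=0$ we have $f(k(\xi))=\lambda_0(\xi)\tilde x_0(\xi)$, so $f|_{[0,1]}=x_{d,0}$. The map $k$ is a homeomorphism with $k(0)=1$ and $k(1)=0$, so $f$ is determined on $[0,1]$ by the initial data.

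The boundary conditions of \eqref{eq:WN-0} then fix $f$ on the intervals $[\tau,\tau+1]$ for $\tau\ge1$. At time $t=(\tau+\zeta)p(1)$ the point $\xi=0$ gives $f(\tau+1+\zeta)=\lambda_0(0)\tilde x(0,t)$ and the point $\xi=1$ gives $f(\tau+\zeta)=\lambda_0(1)\tilde x(1,t)$. Inserting these into $Bu(t)=\lambda_0(0)\tilde x(0,t)-A\lambda_0(1)\tilde x(1,t)$ yields
\begin{align*}
x_d(\tau+1)(\zeta)=A\,x_d(\tau)(\zeta)+B\,u_d(\tau)(\zeta),
\end{align*}
where $x_d(\tau)(\zeta)=f(\tau+\zeta)$ and $u_d(\tau)(\zeta)=u((\tau+\zeta)p(1))$. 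In the same way the output equation becomes $y_d(\tau)=M_Cx_d(\tau)+M_Du_d(\tau)$. Conversely, given $x_{d,0}$ and $u_d$, the recursion defines $f$, and the formula above defines $\tilde x$, which solves \eqref{eq:WN-0}. This gives the equivalence in both directions.

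The main obstacle is passing from classical to mild solutions. For $\tilde x_0\in Z$ and smooth compatible $u$ the computation above is rigorous, since $w$ is then continuous and the boundary traces exist. To extend to general data I will use two facts. First, the change of variables $\zeta=k(\xi)$ with $\d\zeta=-\d\xi/(p(1)\lambda_0(\xi))$ shows that $\tilde x_0\mapsto\lambda_0(k^{-1}(\cdot))\tilde x_0(k^{-1}(\cdot))$ is an isomorphism of $\L^2$ spaces, because $\lambda_0$ and $\lambda_0^{-1}$ are bounded. Second, $u\mapsto u_d$ is a bounded bijection between $\L^2_{\mathrm{loc}}$ and sequences in $\L^2([0,1])$. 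Both solution maps are bounded: the continuous one by the well-posedness estimate \eqref{eq:Well-posedness-Ineq}, and the discrete one trivially. Since they agree on a dense set, they agree everywhere by density. The remark on uniqueness of the relation then follows from decomposing $t/p(1)=\tau+\zeta$ (up to the scaling by $p(1)$), where the overlap points $\zeta\in\{0,1\}$ form a null set and are consistent because of the recursion.
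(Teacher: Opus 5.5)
Your argument is correct up to a sign slip noted at the end, but it runs in the opposite direction from the paper's.

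\textbf{The paper's route.} It starts from the discrete system. Given $x_d,u_d,y_d$, it defines $f,u,y$ and rewrites \eqref{eq:WN-disc} as the functional recursion $f(p(1)^{-1}t+1)=Af(p(1)^{-1}t)+Bu(t)$, $y(t)=Cf(p(1)^{-1}t)+Du(t)$. It then defines $\tilde x$ by the explicit formula and checks the initial, boundary and output relations; these are exactly your evaluations at $\xi=0$ and $\xi=1$. It concludes by the uniqueness of solutions of a well-posed boundary control system.

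\textbf{Your route.} You derive the representation from the PDE by characteristics. So the form $\tilde x(\xi,t)=\lambda_0(\xi)^{-1}f(k(\xi)+p(1)^{-1}t)$ is forced rather than guessed, and uniqueness is in effect reproved. You also make the passage from classical to mild solutions explicit, by density and boundedness of both solution maps. The paper leaves this implicit in the phrase ``$\tilde x$ fulfills the differential equation'', which for $\L^2$ data can only be meant in the mild sense.

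\textbf{Trade-off.} The paper's route is shorter because it outsources uniqueness to the abstract well-posedness theory. Yours is more self-contained. However, your characteristic step must be made rigorous for $\lambda_0$ merely in $\L^\infty$. The cleanest way is the bi-Lipschitz substitution $\eta=p(\xi)$, under which $w=\lambda_0\tilde x$ solves $\partial_t w+\partial_\eta w=0$.

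\textbf{Two small corrections.}
First, $p(\xi)-t$ ranges over $(-\infty,p(1)]$, and $f(s):=g(p(1)(s-1))$ gives $f(k(\xi)+p(1)^{-1}t)=g(t-p(\xi))$. You should therefore write $w(\xi,t)=g(t-p(\xi))$ with $g$ on $[-p(1),\infty)$.
Second, at the overlap points $\zeta\in\{0,1\}$ nothing needs to be checked for $\L^2$ data, since they form a null set. For classical data, the agreement $x_d(\tau)(1)=x_d(\tau+1)(0)$ comes from continuity of $w$, i.e.\ from compatibility of the data, not from the recursion.
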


\begin{proof}
We start with \eqref{eq:WN-disc}. Let the solution $x_d$, the input $u_d$ and the output $y_d$ be given. For $0\leq t= (\tau +\zeta)p(1)$ with $\tau\in\NN_0$ and $\zeta\in[0,1]$ we define \begin{align*}
f\left( p(1)^{-1}t \right)=f(\tau+\zeta)=x_d(\tau)(\zeta),\\
u(t)=u((\tau+\zeta)p(1))=u_d(\tau)(\zeta) ,\\
y(t)= y((\tau+\zeta)p(1))=y_d(\tau)(\zeta).
\end{align*}
With this setting \eqref{eq:WN-disc} is equivalent to \begin{align*}
f\left( p(1)^{-1}t +1\right) &= {A} f\left( p(1)^{-1}t \right) + {B} u(t),\quad t\geq 0,\\
f(\zeta) &= x_{d,0}(\zeta),\quad \zeta\in [0,1],\\
y(t) &= {C} f\left( p(1)^{-1}t \right) + {D} u(t), \quad t\geq 0.
\end{align*}
If we define $
\tilde{x}(\xi,t):=\lambda_0(\xi)^{-1}f\left(k(\xi)+ p(1)^{-1}t \right)$, $ t\geq0, \xi\in [0,1],$
we get that 
\begin{align*}
\tilde{x}(\xi,0) &= \tilde{x}_0(\xi),\quad \xi\in [0,1],\\
B u(t) &= \lambda_0(0)\tilde{x}(0,t) -  A\lambda_0(1)\tilde{x}(1,t), \quad  t\geq 0,\\
y(t) &= C\lambda_0(1) \tilde{x}(1,t) + Du(t),  \quad t\geq 0.
\end{align*}
Since the solution of a well-posed boundary control system is unique, noting that $\tilde{x}$ fulfills the differential equation in \eqref{eq:WN-0} concludes the proof.
\end{proof}

    Note that the solution of \eqref{eq:WN-disc} at time $\tau\in\NN_0$ is given by \begin{align*}
x_d(\tau)=& M_A^\tau x_{d,0}+\sum_{\sigma=0}^{\tau-1} M_A^{\tau-1-\sigma}M_B u_d(\sigma), \quad
y_d(\tau)= M_C M_A^\tau x_{d,0}+\sum_{\sigma=0}^{\tau-1} M_CM_A^{\tau-1-\sigma}M_B u_d(\sigma)+M_D u_d(\tau),
\end{align*} where $x_{d,0}\in\L^2([0,1];\KK^n)$ is the initial value and $u_d(0),u_d(1),\ldots,$ $u_d(\tau-1)\in \L^2([0,1];\KK^m)$ are the inputs. 
To transfer the stability properties from \eqref{eq:WN-0} to \eqref{eq:WN-disc}, we require an explicit relation between the states of the two systems \eqref{eq:WN-0} and \eqref{eq:WN-disc}, which is summarized in the next remark.
\begin{remark}\label{rem:stateRel}
   For $\tau\in\NN_0$ and $\zeta\in[0,1]$ we have \begin{align*}
    x_d(\tau)(\zeta)= f(\tau+\zeta) = \lambda_0(k^{-1}(\zeta)) \tilde{x}\left(k^{-1}(\zeta), \tau p(1)\right).
\end{align*}
 Moreover, if we pick $t>0$ with $ p(1)^{-1}t \in \NN$ and $\xi\in[0,1]$, then it holds that
 \begin{align*}
\tilde{x}(\xi,t)=\lambda_0(\xi)^{-1} f\left( k(\xi)+  p(1)^{-1}t \right)=\lambda_0(\xi)^{-1} x_d\left( p(1)^{-1}t \right)(k(\xi)).
\end{align*} 
For arbitrary $\xi\in [0,1]$ and $t>0$, which we split as $ p(1)^{-1}t =\tau+\delta$ with $\tau\in\NN_0$ and $\delta\in[0,1)$, we have
\begin{align*}
&    \tilde{x}(\xi,t)= \tilde{x}(\xi, p(1)(\tau+\delta) )=\lambda_0(\xi)^{-1} f\left( k(\xi)+ \tau+\delta\right)
    =\begin{cases}
     \lambda_0(\xi)^{-1} x_d(\tau)(k(\xi)+\delta),& \text{ if } \,\,k(\xi)+\delta\leq1, \\
      \lambda_0(\xi)^{-1} x_d(\tau+1)(k(\xi)+\delta-1),& \text{ if } \,\,k(\xi)+\delta>1.
    \end{cases}
\end{align*}
\end{remark}

Our objective is to use the link between the continuous- and discrete-time systems \eqref{eq:WN-0} and \eqref{eq:WN-disc} to study the stability of \eqref{eq:WN-P0-pert}. 
Since the invertibility of $I-\Delta D$ is an equivalent condition for the well-posedness of \eqref{eq:WN-P0-pert}, we shall make the assumption that $I-\Delta D$ is invertible throughout this section.

\begin{lemma}\label{lem:ClosedDiscTimeInfDim}
     Let $\Delta\in\KK^{m\times k}$ with $I-\Delta D$ invertible. Then, the closed-loop system composed of \eqref{eq:WN-disc} and $u_d(\tau)=M_\Delta y_d(\tau)$ is well-posed, where by \emph{well-posed} we mean that the closed-loop system admits a unique solution.
\end{lemma}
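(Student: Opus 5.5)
We need to show that the closed loop $x_d(\tau+1)=M_Ax_d(\tau)+M_Bu_d(\tau)$, $y_d(\tau)=M_Cx_d(\tau)+M_Du_d(\tau)$, $u_d(\tau)=M_\Delta y_d(\tau)$ has a unique solution for every initial state $x_{d,0}\in\L^2([0,1];\KK^n)$. The argument is a fixed-point computation at each discrete time step, which reduces to the invertibility of a multiplication operator.

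First, substituting the feedback law into the output equation gives, for each $\tau\in\NN_0$, the algebraic loop
\begin{align*}
u_d(\tau)=M_\Delta M_C x_d(\tau)+M_\Delta M_D u_d(\tau), \quad\text{i.e.}\quad (I-M_{\Delta D})u_d(\tau)=M_{\Delta C}x_d(\tau).
\end{align*}
Multiplication operators compose according to $M_JM_{J'}=M_{JJ'}$ and satisfy $M_I=I$. Since $I-\Delta D$ is an invertible constant matrix, $M_{I-\Delta D}$ is a bounded bijection on $\L^2([0,1];\KK^m)$ with inverse $M_{(I-\Delta D)^{-1}}$. Indeed, $M_{(I-\Delta D)^{-1}}$ maps $\L^2$ into $\L^2$ because $\|Jx(\zeta)\|\le\|J\|\,\|x(\zeta)\|$ pointwise. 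Hence, given $x_d(\tau)$, the equation above has exactly one solution in $\L^2([0,1];\KK^m)$, namely
\[
u_d(\tau)=M_{(I-\Delta D)^{-1}\Delta C}\,x_d(\tau).
\]

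Second, we define the solution recursively:
\begin{align*}
x_d(\tau+1)=M_{A+B(I-\Delta D)^{-1}\Delta C}\,x_d(\tau),\quad x_d(0)=x_{d,0},
\end{align*}
with $u_d(\tau)$ as above and $y_d(\tau)=M_Cx_d(\tau)+M_Du_d(\tau)$. Since $(I-\Delta D)^{-1}\Delta=\Delta(I-D\Delta)^{-1}$, one checks that $y_d(\tau)=M_{(I-D\Delta)^{-1}C}x_d(\tau)$; this uses that $I-D\Delta$ is invertible, which follows from the invertibility of $I-\Delta D$. It is, however, enough to note that $M_\Delta y_d(\tau)=u_d(\tau)$ holds by construction. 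This gives existence.

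Uniqueness follows by induction on $\tau$. Any other solution has the same $x_d(0)$. If two solutions agree at $x_d(\tau)$, then uniqueness in the algebraic loop forces the same $u_d(\tau)$, hence the same $y_d(\tau)$ and the same $x_d(\tau+1)$.

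The only point needing care is the algebraic loop, that is, the invertibility of $I-M_{\Delta D}$ as an operator. This is exactly where the hypothesis that $I-\Delta D$ is invertible enters, and it is handled by the explicit inverse $M_{(I-\Delta D)^{-1}}$. The rest is routine bookkeeping.
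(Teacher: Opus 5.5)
Your proposal is correct and follows the paper's approach. The paper's one-line proof is exactly the observation that $I-M_\Delta M_D=M_{I-\Delta D}$ is invertible (with inverse $M_{(I-\Delta D)^{-1}}$), which resolves the algebraic loop; you additionally write out the resulting recursion and the induction argument for uniqueness.
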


\begin{proof}
         It suffices to note that $I-\Delta D$ is invertible if and only if $I-M_\Delta M_D$ is invertible.
         \end{proof}

\begin{remark}
    The system composed of \eqref{eq:WN-disc} and $u_d(\tau)=M_\Delta y_d(\tau)$ could be understood as the structured perturbation of \eqref{eq:WN-disc}. It reads as \begin{align}\tag{\text{$\Sigma_d^\Delta$}}\label{eq:WN-disc-pert}
        \begin{split}
            x_d(\tau+1)&=\left(M_A+M_B(I-M_\Delta M_D)^{-1} M_\Delta M_C\right) x_d(\tau), \quad \tau\in\NN_0,\quad   x_d(0)=x_{d,0}.
        \end{split}
    \end{align}
\end{remark}

We define stability for the discrete-time setting as follows.
\begin{definition}\label{def:StabDiscr}
    Let $\Delta\in\KK^{m\times k}$ with $I-\Delta D$ is invertible. The system \eqref{eq:WN-disc-pert}, whose state is denoted by $x_d$, is said to be asymptotically stable, if for all $x_{d,0}\in \L^2([0,1],\KK^n)$ we have  \begin{align*}
        \lim_{\tau\to\infty,\tau\in\NN} x_d(\tau) = 0.
    \end{align*} 

      If there exist $\omega>0$ and $M\geq 1$ such that for all $x_{d,0}\in \L^2([0,1],\KK^n)$ and all $\tau\in\NN_0$ the state of \eqref{eq:WN-disc-pert}, denoted by $x_d$, satisfies \begin{align*}
         \Nl x_d(\tau) \Nr_{\L^2([0,1]; \KK^n)} \leq M \mathrm{e}^{-\omega \tau}  \Nl x_{d,0} \Nr_{\L^2([0,1]; \KK^n)},
     \end{align*}
     then the system \eqref{eq:WN-disc-pert} is exponentially stable.
\end{definition}

\begin{remark}
  In the discrete-time literature, it is often more common to refer to strong and power stability rather than to asymptotic and exponential stability, respectively.   According to \cite[Chap.~4]{CurR2020b}, \eqref{eq:WN-disc} is strongly stable if $(M_A)^\tau x\to 0$ when $\tau\to\infty$ for all $x\in \L^2([0,1];\KK^n)$, which is equivalent to\footnote{By $x_d$ in this context, we mean the state of \eqref{eq:WN-disc} with $u\equiv 0$.} $x_d(\tau)\to 0$ when $\tau\to\infty$ for all $x_{d,0}\in \L^2([0,1];\KK^n)$. In particular, this means that asymptotic and strong stability coincide in this context. Moreover, \eqref{eq:WN-disc} is said to be power stable if there exist $M\geq 1$ and $\gamma\in(0,1)$ such that $\Vert (M_A)^\tau\Vert_{\mathcal{L}(\L^2([0,1];\KK^n))}\leq M\gamma^\tau$, for all $\tau\in\mathbb{N}$, see e.g.\cite[Chap.~4]{CurR2020b}. Making the identification $\gamma = e^{-\omega}$ in Definition \ref{def:StabDiscr} shows that power and exponential stability are equivalent concepts.
\end{remark}

In the next lemma we show that the stability concepts for the continuous-time system \eqref{eq:WN-0} and the discrete-time systems \eqref{eq:WN-disc} coincide.
\begin{lemma}\label{lem:Stab-Wn-Disc}
    Let $\Delta\in\KK^{m \times k}$ and assume that $I-\Delta D$ is invertible.
    The system \eqref{eq-WN-0-pert} is asymptotically or exponentially stable, if and only if \eqref{eq:WN-disc-pert} is asymptotically or exponentially stable, respectively.
\end{lemma}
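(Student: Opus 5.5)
The plan is to compare the two states directly through the explicit relation in Remark~\ref{rem:stateRel}, which links them by a change of variables. First I would note that the closed-loop system \eqref{eq-WN-0-pert} is \eqref{eq:WN-0} with $A$ replaced by $A_\Delta:=A+B(I-\Delta D)^{-1}\Delta C$ and $B u\equiv 0$. It therefore falls under Proposition~\ref{satz:Diskretisierung} with $u\equiv0$, and its discretisation is exactly \eqref{eq:WN-disc-pert}, since $M_A+M_B(I-M_\Delta M_D)^{-1}M_\Delta M_C=M_{A_\Delta}$.

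The key step is a two-sided norm equivalence. Substituting $\zeta=k(\xi)$, so that $\d\zeta = (p(1)\lambda_0(\xi))^{-1}\d\xi$, the first identity of Remark~\ref{rem:stateRel} gives
\begin{align*}
\Nl x_d(\tau)\Nr_{\L^2}^2=\int_0^1 \lambda_0(\xi)^2\Nl\tilde{x}(\xi,\tau p(1))\Nr^2\frac{\d\xi}{p(1)\lambda_0(\xi)}.
\end{align*}
Since $\lambda_0,\lambda_0^{-1}\in\L^\infty$, there are constants $c_1,c_2>0$ with $c_1\Nl\tilde x(\cdot,\tau p(1))\Nr\le\Nl x_d(\tau)\Nr\le c_2\Nl\tilde x(\cdot,\tau p(1))\Nr$. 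The same computation shows that $\tilde x_0\mapsto x_{d,0}$ is a bijection of $\L^2$ with bounded inverse, so quantifying over all initial data is the same on both sides.

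\emph{Continuous implies discrete.} Restricting the continuous decay $\tilde x(\cdot,t)\to0$, or $\le Me^{-\omega t}$, to the sampling times $t=\tau p(1)$ gives the discrete statement immediately, with rate $\omega p(1)$.

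\emph{Discrete implies continuous.} For general $t$, write $p(1)^{-1}t=\tau+\delta$ and use the case split in Remark~\ref{rem:stateRel}. Splitting the $\xi$-integral according to whether $k(\xi)+\delta\le1$ or not, and changing variables as above, yields
\begin{align*}
\Nl\tilde x(\cdot,t)\Nr^2\le c\left(\Nl x_d(\tau)\Nr^2+\Nl x_d(\tau+1)\Nr^2\right),
\end{align*}
with $c$ independent of $\delta$. Here I use that the shift by $\delta$ only restricts the domain, so the Jacobian bounds are unchanged. Asymptotic stability then transfers because both terms tend to $0$ as $\tau\to\infty$. For the exponential case, $\tau\ge p(1)^{-1}t-1$ gives $Me^{-\omega\tau}\le Me^{\omega}e^{-\omega t/p(1)}$.

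The main obstacle is justifying this $\delta$-uniform bound carefully, i.e.\ the change of variables on the shifted piece $\zeta=k(\xi)+\delta$. Since $k$ is an absolutely continuous homeomorphism with derivative bounded above and below, I expect this to be routine once written out. Alternatively, the bound follows from uniform boundedness of the semigroup on $[0,p(1)]$, which holds by well-posedness (Lemma~\ref{lem:Well-Posedness_Cl}): $\Nl\tilde x(\cdot,t)\Nr\le m\,\Nl\tilde x(\cdot,\tau p(1))\Nr$. I would use this semigroup argument as the cleanest route.
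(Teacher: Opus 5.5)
Your proof is correct and its skeleton is the paper's. The closed loop is matched with \eqref{eq:WN-disc-pert} through Proposition~\ref{satz:Diskretisierung}; you do this directly with $A_\Delta$, the paper by transporting $u=\Delta y$ to $u_d=M_\Delta y_d$. Continuous $\Rightarrow$ discrete then follows by sampling at $t=\tau p(1)$ via Remark~\ref{rem:stateRel}, and norm equivalence does the rest.

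You are more explicit than the paper on that equivalence. Your substitution (with $|\d\zeta|$, since $k$ is decreasing) gives exactly $\Nl x_d(\tau)\Nr^2=p(1)^{-1}\int_0^1\lambda_0\Nl\tilde x(\xi,\tau p(1))\Nr^2\,\d\xi$. So sampling is $p(1)^{-1/2}$ times an isometry from the $\lambda_0$-weighted space $X$ of Section~\ref{sec:DualAndContrObs} onto $\L^2([0,1];\KK^n)$. You also check that $\tilde x_0\mapsto x_{d,0}$ is surjective, which the paper uses tacitly when transferring ``for all initial data''.

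The real difference is the converse direction. The paper uses the case split of Remark~\ref{rem:stateRel} at non-sampling times, which your first sketch (with the $\delta$-uniform constant) makes precise. Your preferred route instead writes $\tilde x(\cdot,t)=T_\Delta(t-\tau p(1))\,\tilde x(\cdot,\tau p(1))$ and uses $\sup_{0\le s\le p(1)}\Nl T_\Delta(s)\Nr<\infty$ for the closed-loop semigroup provided by Corollary~\ref{lem:Well-Posedness_Cl}.

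The paper's argument is self-contained, resting only on the explicit solution formula. Yours invokes the generation result but avoids the shifted change of variables. It is also an instance of a general principle: decay along equally spaced times, together with local boundedness of a $C_0$-semigroup, always yields decay for all $t\ge0$, with rate $\omega/p(1)$ in the exponential case.
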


\begin{proof} 
Let $\Delta\in\KK^{m \times k}$ with $I-\Delta D$ is invertible. 
If we set $u(t)=\Delta y(t)$ in \eqref{eq:WN-0}, we obtain that for $\tau\in\NN$ and $\zeta\in[0,1]$,
    \begin{align*}
        u_d(\tau)(\zeta)=u((\tau+\zeta)p(1))=\Delta y((\tau+\zeta)p(1))=\Delta y_d(\tau)(\zeta).
    \end{align*}
    Thus, the feedback $u(t)=\Delta y(t)$ implies that $u_d(\tau)=M_\Delta y_d(\tau)$ in \eqref{eq:WN-disc} and vice versa. Thus the solutions of \eqref{eq-WN-0-pert} and \eqref{eq:WN-disc-pert} are related in the sense of Proposition \ref{satz:Diskretisierung}. In Remark \ref{rem:stateRel}, it has been highlighted that, for $\tau\in\NN$ and $\zeta\in[0,1]$, $x_d(\tau)(\zeta) = \lambda_0\left(k^{-1}(\zeta)\right) \tilde{x}\left(k^{-1}(\zeta), \tau p(1)\right)$. Thus, asymptotic or exponential stability of \eqref{eq-WN-0-pert} implies asymptotic or exponential stability of \eqref{eq:WN-disc-pert}, respectively, where we have used that $\Nl (\lambda_0 \circ k^{-1})(\cdot\circ k^{-1} )\Nr_{\L^2([0,1]; \KK^n)}$ is an equivalent norm on $\L^2([0,1];\KK^m)$. For the converse direction we use the identity \begin{align*}
       \tilde{x}(\xi,t)    =\begin{cases}
     \lambda_0(\xi)^{-1} x_d(\tau)(k(\xi)+\delta),& k(\xi)+\delta\leq1, \\
      \lambda_0(\xi)^{-1} x_d(\tau+1)(k(\xi)+\delta-1),& k(\xi)+\delta>1.
    \end{cases}
    \end{align*} for $t=p(1)(\tau+\delta)>0$ with $\tau\in\NN_0$, $\delta\in[0,1)$ and $\xi\in[0,1]$ from Remark \ref{rem:stateRel}.
    As $\Nl\lambda_0^{-1}(\cdot\circ k)\Nr_{\L^2([0,1]; \KK^n)}$ is an equivalent norm, asymptotic or exponential stability of \eqref{eq:WN-disc-pert} implies asymptotic or exponential stability of \eqref{eq-WN-0-pert}, respectively. 
\end{proof}

\section{Some properties of linear finite-dimensional discrete-time systems}\label{sec:IntrFinDimSyst}
We aim to  examine the infinite-dimensional system \eqref{eq:WN-disc} via the finite-dimensional system
\begin{align}\tag{\text{$\Sigma_f$}}\label{eq:fin-dim-syst}
\begin{split}
x_f(\tau+1)&= A x_f(\tau) +B u_f(\tau), \quad \tau \in \NN_0, \quad x_f(0)=x_{f,0}, \\
y_f(\tau)&=C x_f(\tau) + D u_f(\tau), \quad \tau \in \NN_0, \end{split}
\end{align} with initial value $x_{f,0}\in \KK^n$, inputs $u_f(0), u_f(1),\ldots \in \KK^m$ and output space  $\KK^k$. In this section $A\in\KK^{n\times n}, B\in\KK^{n\times m}, C\in\KK^{k\times n}$ and $D\in\KK^{k\times m}$ are arbitrary matrices and 
 we summerize some general basic properties on finite-dimensional discrete-time systems. Before diving into those properties, we recall the expression for the solution of \eqref{eq:fin-dim-syst}, which, for $\tau\in\NN_0$, is given by
\begin{align*}
x_f(\tau)=& A^\tau x_{f,0}+\sum_{\sigma=0}^{\tau-1} A^{\tau-1-\sigma}B u_f(\sigma), \quad 
y_f(\tau)= CA^\tau x_{f,0}+\sum_{\sigma=0}^{\tau-1} CA^{\tau-1-\sigma}B u_f(\sigma)+D u_f(\tau),
\end{align*} 
with initial value $x_{f,0}\in\KK^n$ and inputs $u_f(0),\ldots ,u_f(\tau-1)\in \KK^m$.
In the following, we show some results on the robustness of the stability for \eqref{eq:fin-dim-syst}. We start with the feedback approach and show the well-posedness.
\begin{lemma}
    Let $\Delta\in\KK^{m\times k}$ be such that $I-\Delta D$ is invertible. Then the closed-loop system composed of \eqref{eq:fin-dim-syst} and $u_f(\tau)=\Delta y_f(\tau)$, $\tau\in\NN_0$, is well-posed, that is, it admits a unique solution.
\end{lemma}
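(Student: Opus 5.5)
The plan is to substitute the feedback law directly into the output equation and solve it for the input at each time step. With $u_f(\tau)=\Delta y_f(\tau)$, the output equation of \eqref{eq:fin-dim-syst} gives
\begin{align*}
y_f(\tau) = C x_f(\tau) + D\Delta y_f(\tau), \qquad\text{equivalently}\qquad (I-D\Delta)\,y_f(\tau) = C x_f(\tau).
\end{align*}
Alternatively, multiplying by $\Delta$ and writing the equation in the input variable,
\begin{align*}
(I-\Delta D)\,u_f(\tau) = \Delta C x_f(\tau).
\end{align*}
Since $I-\Delta D$ is invertible by assumption, for every given state $x_f(\tau)$ this equation has exactly one solution, namely
\begin{align*}
u_f(\tau)=(I-\Delta D)^{-1}\Delta C x_f(\tau).
\end{align*}
The output is then uniquely determined as $y_f(\tau)=Cx_f(\tau)+Du_f(\tau)$. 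One should check that this pair actually satisfies $u_f=\Delta y_f$. This is the identity
\begin{align*}
\Delta C + \Delta D(I-\Delta D)^{-1}\Delta C=(I-\Delta D)^{-1}\Delta C,
\end{align*}
which follows immediately from $I+\Delta D(I-\Delta D)^{-1}=(I-\Delta D)^{-1}$.

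Next, I would argue by induction on $\tau$. Starting from $x_f(0)=x_{f,0}$, suppose $x_f(\tau)$ is uniquely determined. Then $u_f(\tau)$ and $y_f(\tau)$ are uniquely determined by the previous step, and hence so is
\begin{align*}
x_f(\tau+1)=\bigl(A+B(I-\Delta D)^{-1}\Delta C\bigr)x_f(\tau).
\end{align*}
This gives existence and uniqueness of the closed-loop solution for all $\tau\in\NN_0$. It also shows that the closed-loop dynamics are governed by the matrix $A+B(I-\Delta D)^{-1}\Delta C$, which is the matrix appearing in Proposition~\ref{prop:MainresStab}.

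There is no real obstacle here. The only point that needs care is the consistency check: the solution obtained from the input equation must genuinely satisfy the original feedback loop, and any solution of the loop must arise this way. Both directions follow from the equivalence of $u_f=\Delta y_f$ with $(I-\Delta D)u_f=\Delta Cx_f$, together with the definition of $y_f$. As an optional remark, $I-\Delta D$ is invertible if and only if $I-D\Delta$ is invertible, so one could equally solve the equation for $y_f$ instead, mirroring the argument in Corollary~\ref{lem:Well-Posedness_Cl}.
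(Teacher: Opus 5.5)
Your proposal is correct and is essentially the paper's argument. The paper simply observes that the closed-loop solution is $x_f(\tau)=\bigl(A+B(I-\Delta D)^{-1}\Delta C\bigr)^\tau x_{f,0}$, and your elimination of $u_f(\tau)$ via $(I-\Delta D)u_f(\tau)=\Delta C x_f(\tau)$, together with the consistency check and the induction on $\tau$, just makes explicit why this formula gives the unique solution.
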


    \begin{proof}
        It suffices to observe that the solution of the closed-loop system is given by
         \begin{align*}
             x_f(\tau)&=\left( A+ B(I- \Delta  D)^{-1}  \Delta  C\right)^\tau x_{f,0},\quad \tau \in \NN_0.\qedhere
         \end{align*}
    \end{proof}

\begin{remark}
 The well-posed closed-loop system composed of \eqref{eq:fin-dim-syst} and $u_f(\tau)=\Delta y_f(\tau)$ could be understood as the structured perturbation of \eqref{eq:fin-dim-syst}, which reads as \begin{align}\tag{\text{$\Sigma_f^\Delta$}}\label{eq:fin-dim-syst-pert}
        \begin{split}
            x_f(\tau+1)&= \left( A+ B(I- \Delta  D)^{-1}  \Delta  C\right) x_f(\tau), \quad \tau\in\NN_0,\quad x_f(0)=x_{f,0}.
        \end{split}
    \end{align}
\end{remark}

The stability concepts for \eqref{eq:fin-dim-syst-pert} is given in the next definition.
\begin{definition}
     Let $\Delta\in\KK^{m\times k}$ with $I-\Delta D$ invertible.
     The system \eqref{eq:fin-dim-syst-pert} is said to be \emph{stable} if $\lim_{\tau\to\infty,\tau\in\NN} x_f(\tau)\allowbreak =0$ holds for every initial state $x_{f,0}$.
\end{definition}
This stability concept may be understood as asymptotic stability. In the finite-dimensional case this is equivalent to exponential stability, see e.g.~\cite[Sec.~3.3]{HinD2005b}.

\begin{proposition}\label{prop:FinStab}
Let $\Delta\in\KK^{m\times k}$ be such that $I-\Delta D$ is invertible.  
The following statements are equivalent:

\begin{itemize}
    \item \eqref{eq:fin-dim-syst-pert} is stable.
    \item There exist $\omega>0$ and $M\geq1$ such that $\Vert x_f(\tau) \Vert_{\KK^n} \leq M \mathrm{e}^{-\omega \tau}  \Vert x_{f,0} \Vert_{\KK^n}$ for all $\tau\in\NN_0$.
\item $\sigma\left(A+B(I-\Delta D)^{-1}\Delta C\right)\subset \DD$.
\end{itemize}
\end{proposition}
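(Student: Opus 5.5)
Set $A_\Delta := A+B(I-\Delta D)^{-1}\Delta C\in\KK^{n\times n}$. By the preceding lemma, the solution of \eqref{eq:fin-dim-syst-pert} is $x_f(\tau)=A_\Delta^\tau x_{f,0}$. All three statements therefore concern the behaviour of the powers of one fixed matrix, and the claim reduces to the classical fact that $A_\Delta^\tau\to 0$ if and only if the spectral radius $r(A_\Delta)<1$, with exponential decay in that case. I plan to prove the cycle (ii)$\Rightarrow$(i)$\Rightarrow$(iii)$\Rightarrow$(ii).

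\textbf{(ii)$\Rightarrow$(i).} This is immediate, since $M\mathrm{e}^{-\omega\tau}\to0$.

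\textbf{(i)$\Rightarrow$(iii).} I argue by contraposition. Suppose $\mu\in\sigma(A_\Delta)$ with $|\mu|\geq1$, and let $v\in\CC^n\setminus\{0\}$ be an eigenvector. Then $A_\Delta^\tau v=\mu^\tau v$ does not tend to zero. If $\KK=\CC$, the initial value $x_{f,0}=v$ already gives a contradiction. If $\KK=\RR$, I write $v=v_1+\mathrm{i}v_2$ with $v_1,v_2\in\RR^n$. Because $A_\Delta$ is real, $A_\Delta^\tau v=A_\Delta^\tau v_1+\mathrm{i}A_\Delta^\tau v_2$. If both real initial values $v_1$ and $v_2$ produced trajectories tending to zero, then so would $v$, which is impossible. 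Hence at least one of $v_1,v_2$ violates stability. This real-versus-complex step is the only place needing care, and it is what I expect to be the main (small) obstacle.

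\textbf{(iii)$\Rightarrow$(ii).} Assume $r:=r(A_\Delta)<1$ and choose $\gamma\in(r,1)$. By Gelfand's formula $\|A_\Delta^\tau\|^{1/\tau}\to r$, so there is $\tau_0$ with $\|A_\Delta^\tau\|\le\gamma^\tau$ for all $\tau\geq\tau_0$. Setting $M:=\max\{1,\max_{\tau<\tau_0}\|A_\Delta^\tau\|\gamma^{-\tau}\}$ gives $\|A_\Delta^\tau\|\le M\gamma^\tau$ for every $\tau\in\NN_0$. Finally, put $\omega:=-\ln\gamma>0$.

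Alternatively, one could use the Jordan form: $\|A_\Delta^\tau\|\le c\,\tau^{n-1}r^\tau\le M\mathrm{e}^{-\omega\tau}$ for any $\omega<-\ln r$ (when $r=0$ the matrix is nilpotent and the bound is trivial). Since all norms on $\KK^n$ are equivalent, the choice of norm only affects the constant $M$.
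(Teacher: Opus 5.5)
Your proof is correct and complete. It handles $\KK=\RR$ properly by splitting a complex eigenvector into real and imaginary parts, and it secures $M\geq 1$ in the Gelfand-formula bound. The paper does not prove this statement itself: it treats it as the classical finite-dimensional fact and refers to \cite[Sec.~3.3]{HinD2005b}. Your argument (reduction to powers of $A_\Delta$, the eigenvector argument for necessity, and Gelfand's formula or the Jordan form for the exponential estimate) is a self-contained version of the standard proof behind that citation.
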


\eqref{eq:fin-dim-syst} is said to be \emph{stable}, if the state of \eqref{eq:fin-dim-syst} with $u_f(\tau)=0$  tends to zero as $\tau\to\infty$.
According to Proposition \ref{prop:FinStab} (with $\Delta=0$) it follows that \eqref{eq:fin-dim-syst} is stable if and only if $\sigma(A)\subset \DD$. The stability radius is defined as follows:
\begin{definition}\label{def:stab_rad}
        The stability radius of a stable system \eqref{eq:fin-dim-syst} is defined by
    \begin{align*}
        r_\KK\eqref{eq:fin-dim-syst}&:= \inf \left\lbrace \Nl \Delta \Nr_{\KK^{m\times k}}  : \Delta \in \KK^{m\times k}, I-\Delta D \text{ is not invertible or }\eqref{eq:fin-dim-syst-pert} \text{ is not stable} \right\rbrace.
    \end{align*}
\end{definition}

Research on the stability radius for finite-dimensional systems has been initiated in the 80's and 90's. Important results on that topic can be found in e.g.~the notes and references of \cite[Sec.~5.3]{HinD2005b}. We summarize the main results of \cite[Sec.~5.3]{HinD2005b} in the following proposition.
\begin{proposition}\label{prop:StabRadiFin}
   Let $G(z)=C(z-A)^{-1} B+D$, $z\in \rho(A)$ be the so-called transfer function of \eqref{eq:fin-dim-syst} and let \eqref{eq:fin-dim-syst} be stable. The complex stability radius is given by
     \begin{align*}
         r_\CC\eqref{eq:fin-dim-syst}= \left[{\max_{z\in\partial \DD} \Nl G(z) \Nr_{\CC^{k\times m}} }\right]^{-1}.
     \end{align*}

For the real stability radius we  assume that all norms are induced by the Euclidean vector-norm.
  In the case where $m=1$ or $k=1$, the real stability radius takes the form
\begin{align*}
   r_\RR\eqref{eq:fin-dim-syst}= \left[ \max \left\lbrace\Nl D\Nr_{\KK^{k\times m}} , \max_{z\in\partial\DD} \dist\left( \Re (G(z)), \RR \Im (G(z)) \right) \right\rbrace \right]^{-1}
\end{align*}
    and when $m,k>1$ we obtain
          \begin{align*}
         r_\RR\eqref{eq:fin-dim-syst}=\left[{\max\left\lbrace \Nl D \Nr_{\KK^{k\times m}} , \max_{z\in\partial \DD}\inf_{\gamma\in (0,1]} \sigma_2\left(\left[\begin{smallmatrix}
        \Re(G(z)) & -\gamma \Im(G(z)) \\
        \frac{1}{\gamma}\Im(G(z)) & \Re(G(z))
    \end{smallmatrix}\right]\right) \right\rbrace}\right]^{-1}
     \end{align*}
    where $\sigma_2(J)$ stands for the second largest spectral value of a matrix $J$. Note that we adopted the convention $\frac{1}{\infty}=0$. 
\end{proposition}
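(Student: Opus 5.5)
This statement is the classical finite-dimensional result, so the plan is to reduce it to known facts about the spectrum of the perturbed matrix $A_\Delta:=A+B(I-\Delta D)^{-1}\Delta C$, as in \cite[Sec.~5.3]{HinD2005b}. The key identity is as follows. For $z\in\rho(A)$ with $I-\Delta D$ invertible, $z\in\sigma(A_\Delta)$ if and only if $I-\Delta G(z)$ is singular. To see this, write $z-A_\Delta=(z-A)\bigl(I-(z-A)^{-1}B(I-\Delta D)^{-1}\Delta C\bigr)$ and use $\det(I-XY)=\det(I-YX)$. This gives the condition $\det\bigl(I-(I-\Delta D)^{-1}\Delta C(z-A)^{-1}B\bigr)=0$. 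Multiplying by $I-\Delta D$ turns it into $\det(I-\Delta D-\Delta C(z-A)^{-1}B)=\det(I-\Delta G(z))=0$. Since $G(z)\to D$ as $|z|\to\infty$, the well-posedness condition fits naturally as ``$z=\infty$''.

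\textbf{Complex case.} Suppose $\|\Delta\|<[\max_{|z|=1}\|G(z)\|]^{-1}$. Then $\|t\Delta G(z)\|<1$ for all $t\in[0,1]$ and $|z|\geq 1$, because $G$ is analytic outside $\DD$ including $\infty$ and the maximum modulus principle applies. Also $\|D\|=\|G(\infty)\|\le\max_{\partial\DD}\|G\|$. Hence $I-t\Delta D$ is invertible, and no eigenvalue of $A_{t\Delta}$ lies in $\{|z|\ge1\}$. By continuity of eigenvalues in $t$, starting from $\sigma(A)\subset\DD$, stability persists. This argument in fact only needs the identity at each point, so homotopy is not essential. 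Conversely, pick $z_0\in\partial\DD$ attaining the maximum, and unit vectors $u,v$ with $G(z_0)u=\|G(z_0)\|v$. Set $\Delta=\|G(z_0)\|^{-1}u v^*$, a rank-one matrix with norm equal to the reciprocal of the maximum. Then $I-\Delta G(z_0)$ annihilates $u$, so either $I-\Delta D$ is singular or $z_0\in\sigma(A_\Delta)$. In both cases $\Delta$ is destabilizing, and the formula follows, with the convention $1/\infty=0$ when $G\equiv0$.

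\textbf{Real case.} By the same identity, and taking the point $\infty$ into account, $r_\RR$ equals $\bigl[\sup_{z\in\partial\DD\cup\{\infty\}}\mu_\RR(G(z))\bigr]^{-1}$, where $\mu_\RR(M)^{-1}:=\inf\{\|\Delta\|:\Delta$ real, $\det(I-\Delta M)=0\}$. The sufficiency direction again uses continuity in $t$: the set $\{|z|\ge1\}\cup\{\infty\}$ is compact, which keeps eigenvalues from escaping through infinity. At $z=\infty$ the matrix $M=D$ is real, so $\mu_\RR(D)=\|D\|$, attained by a real rank-one $\Delta$. For $z\in\partial\DD$ we need the real structured singular value formula of Qiu et al.:
\[
\mu_\RR(M)=\inf_{\gamma\in(0,1]}\sigma_2\Bigl(\bigl[\begin{smallmatrix}\Re M&-\gamma\Im M\\ \gamma^{-1}\Im M&\Re M\end{smallmatrix}\bigr]\Bigr).
\]
When $m=1$ or $k=1$ this reduces to $\dist(\Re M,\RR\Im M)$. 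The \textbf{main obstacle} is this formula for $\mu_\RR$, which is a deep result. I would cite it, as Proposition \ref{prop:StabRadiFin} does via \cite{HinD2005b}, rather than reprove it. One also needs that the supremum over $\partial\DD$ is a maximum, and that a minimizing real $\Delta$ exists at that point. These follow from continuity of $\mu_\RR(G(\cdot))$ on $\partial\DD$ wherever $\Im G\neq0$. The cases where $\Im G(z)=0$ are handled by the rank-one real construction, and at $z=\pm1$ the matrix $G(z)$ is real.
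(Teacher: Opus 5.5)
Your proposal is correct and is essentially the standard argument behind \cite[Sec.~5.3]{HinD2005b}: the identity $\det(I-\Delta G(z))=0$, maximum modulus for the complex case, and a homotopy in $t$ plus the Qiu et al.\ formula for $\mu_{\RR}$ for the real case; the paper itself gives no proof and simply quotes that reference, so there is no competing route. Two minor points: for non-Euclidean norms in the complex case, replace $v^*$ by a norming dual functional of $v$; and attainment of the maximum over $\partial\DD$ and of a minimizing real $\Delta$ follows more directly from upper semicontinuity of $\mu_{\RR}$ and closedness of $\{\Delta:\det(I-\Delta M)=0\}$ than from continuity of $\mu_{\RR}(G(\cdot))$ off the set where $\Im G(z)=0$.
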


\begin{remark}\label{rem:MinNormPertforFiniteDimSyst}
       The infimum in Definition \ref{def:stab_rad} is attained, and a destabilizing perturbation $\Delta_0$ of minimal norm can be constructed.
        To see this, first observe that $r_\KK\eqref{eq:fin-dim-syst}\leq\Nl D\Nr_{\KK^{k\times m}}^{-1}$. If we assume that $D\neq0$, then there exist $u_0\in\KK^m$ and $y_0\in\KK^k$ with $\Nl u_0\Nr_{\KK^m}=\Nl y_0\Nr_{\KK^k}=1$ and $\Nl D\Nr_{\KK^{k\times m}}= y_0^\ast Du_0$. Then for $\Delta_0= \Nl D\Nr_{\KK^{k\times m}}^{-1}  u_0y_0^\ast$, $I-\Delta_0D$ is not invertible and $\Nl \Delta_0\Nr_{\KK^{m\times k}}=\Nl D\Nr_{\KK^{k\times m}}^{-1}$. Therefore by Definition \ref{def:stab_rad} we have $r_\KK\eqref{eq:fin-dim-syst}\leq\Nl D\Nr_{\KK^{k\times m}}^{-1}$. Thus, in the case $r_\KK\eqref{eq:fin-dim-syst}=\Nl D\Nr_{\KK^{k\times m}}^{-1}<\infty$, $\Delta_0$ is a minimal norm destabilizing perturbation of rank 1. In the case where $r_\KK\eqref{eq:fin-dim-syst}<\Nl D\Nr_{\KK^{k\times m}}^{-1}$ we can construct a destabilizing perturbation $\Delta_0$ in a similar way as before. The procedure is as follows, see e.g.~\cite[Sec.~5.3]{HinD2005b}:
        \begin{itemize}
        \item Let $z_0\in\partial\DD$, $u_0\in\CC^m$ and $y_0\in\CC^k$ with $\Nl u_0\Nr_{\CC^m}=\Nl y_0\Nr_{\CC^k}=1$ such that $\max_{z\in\partial \DD} \Nl G(z) \Nr_{\KK^{k\times m}}=\Nl G(z_0)\Nr_{\CC^{k\times m}}= y_0^\ast G(z_0)u_0>0$. Then $\Delta_0= \Nl G(z_0)\Nr_{\CC^{k\times m}}^{-1}u_0 y_0^\ast$ is a minimum norm destabilization of rank 1 of \eqref{eq:fin-dim-syst-pert}.
        \item For $m=1$ let $z_0\in\partial\DD$ and $y_0\in\RR^k$ such that we have \begin{align*}
            \max_{z\in\partial\DD} \dist\left( \Re (G(z)), \RR \Im (G(z)) \right)= \dist\left( \Re (G(z_0)), \RR \Im (G(z_0)) \right)>\Nl D \Nr_{\KK^{k\times m}} 
        \end{align*}with $y_0^\T\Re (G(z_0))=\dist\left( \Re (G(z_0)), \RR \Im (G(z_0)) \right)$ and $y_0^\T \Im (G(z_0))=0$. Then 
        \begin{align*}
        \Delta_0= \left[\dist\left( \Re (G(z_0)), \RR \Im (G(z_0)) \right)\right]^{-1}y_0^\T
        \end{align*}
        is a minimum norm destabilizing real perturbation for \eqref{eq:fin-dim-syst-pert}. The case $k=1$ can be treated similarly.
         \item For the case $m,k>1$ a minimum norm real destabilization can also be constructed, which is not necessarily of rank 1, see e.g.~\cite[Sec.~5.3]{HinD2005b}.
    \end{itemize}
\end{remark}

\section{Finite-dimensional approach for the network of waves}\label{sec:FinAppr}
In this section, we derive our main results for \eqref{eq:WN-disc}, building on analogous results obtained for the finite-dimensional system \eqref{eq:fin-dim-syst}. In particular, we show that the stability of \eqref{eq:WN-P0-pert} can be inferred from the stability of the finite-dimensional discrete-time system \eqref{eq:fin-dim-syst-pert}.
We start with the following proposition at the discrete-time level.
\begin{proposition}\label{prop:Stab-Dis-Fin}
    Let $\Delta\in\KK^{m \times k}$ with $I-\Delta D$ invertible. The following statements are equivalent: \begin{enumerate}
        \item The infinite-dimensional system \eqref{eq:WN-disc-pert} is exponentially stable.
        \item The infinite-dimensional system \eqref{eq:WN-disc-pert} is asymptotically stable.
        \item The finite-dimensional system \eqref{eq:fin-dim-syst-pert} is stable.
    \end{enumerate}
\end{proposition}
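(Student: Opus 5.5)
Set $A_\Delta := A+B(I-\Delta D)^{-1}\Delta C\in\KK^{n\times n}$. The observation behind the whole argument is that the closed-loop operator of \eqref{eq:WN-disc-pert} is itself a multiplication operator by this constant matrix. Indeed, $(I-M_\Delta M_D)^{-1}=M_{(I-\Delta D)^{-1}}$ because multiplication operators compose like matrices, so
\begin{align*}
M_A+M_B(I-M_\Delta M_D)^{-1}M_\Delta M_C = M_{A_\Delta}.
\end{align*}
Hence the solution of \eqref{eq:WN-disc-pert} is $x_d(\tau)=M_{A_\Delta}^\tau x_{d,0}=M_{A_\Delta^\tau}x_{d,0}$, which is pointwise in $\zeta\in[0,1]$ the finite-dimensional trajectory $x_d(\tau)(\zeta)=A_\Delta^\tau x_{d,0}(\zeta)$ of \eqref{eq:fin-dim-syst-pert}.

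The implication (1)$\Rightarrow$(2) is immediate from the definitions. For (3)$\Rightarrow$(1), Proposition~\ref{prop:FinStab} gives $M\ge1$ and $\omega>0$ with $\Vert A_\Delta^\tau\Vert\le M\mathrm e^{-\omega\tau}$, where we use the operator norm induced by the vector norm on $\KK^n$. Since $\Vert M_J\Vert_{\mathcal L(\L^2)}\le\Vert J\Vert$, we get for every $x_{d,0}$
\begin{align*}
\Vert x_d(\tau)\Vert_{\L^2}^2=\int_0^1\Vert A_\Delta^\tau x_{d,0}(\zeta)\Vert^2\,\d\zeta\le M^2\mathrm e^{-2\omega\tau}\Vert x_{d,0}\Vert_{\L^2}^2,
\end{align*}
which is exponential stability.

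For (2)$\Rightarrow$(3), take any $v\in\KK^n$ and the constant initial value $x_{d,0}\equiv v\in\L^2([0,1];\KK^n)$. Then $x_d(\tau)\equiv A_\Delta^\tau v$, so $\Vert x_d(\tau)\Vert_{\L^2}=\Vert A_\Delta^\tau v\Vert_{\KK^n}$. Asymptotic stability of \eqref{eq:WN-disc-pert} therefore forces $A_\Delta^\tau v\to0$ for every $v$, i.e.\ \eqref{eq:fin-dim-syst-pert} is stable.

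There is no real obstacle here. The only point needing care is the identification of the closed-loop operator with $M_{A_\Delta}$, including the inverse $(I-M_\Delta M_D)^{-1}=M_{(I-\Delta D)^{-1}}$, which Lemma~\ref{lem:ClosedDiscTimeInfDim} already guarantees exists. One should also check the norm equivalence: in finite dimensions, the choice of norm on $\KK^n$ only changes the constant $M$.
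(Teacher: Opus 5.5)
Your proof is correct and follows the paper's argument essentially step for step. Like the paper, you treat (1)$\Rightarrow$(2) as immediate, prove (2)$\Rightarrow$(3) with the constant initial function $x_{d,0}\equiv v$, and prove (3)$\Rightarrow$(1) by identifying the closed-loop operator with $M_{A_\Delta}$ and carrying the bound $\Vert A_\Delta^\tau\Vert\le M\mathrm{e}^{-\omega\tau}$ from Proposition~\ref{prop:FinStab} over to $\L^2$. The only cosmetic difference is that you use the inequality $\Vert M_J\Vert_{\mathcal{L}(\L^2)}\le\Vert J\Vert$ where the paper writes an equality, and the inequality is all that is needed.
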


   \begin{proof}
        The implication "1.$\Rightarrow$2." is obvious. To show that "2.$\Rightarrow$3.", for an arbitrary initial state $x_{f,0}\in\KK^n$ we consider the constant function $x_{d,0}:= x_{f,0}\in\L^2([0,1];\KK^n)$. From the assumption, the solution $x_d(\tau)$ of \eqref{eq:WN-disc-pert} with initial state $x_{d,0}$ tends to zero as $\tau\to\infty$. Observe now that the solution of \eqref{eq:fin-dim-syst-pert} at time $\tau$ with initial state $x_{f,0}$, denoted by $x_f(\tau)$, is given by $x_{d}(\tau) = x_{f}(\tau)$. Therefore $x_f(\tau)$ tends to zero as $\tau\to\infty$. Finally, we focus on the implication "3.$\Rightarrow$1.". The solution of \eqref{eq:fin-dim-syst-pert} is given by $x_f(\tau)=\left( A+B(I-\Delta D)^{-1}\Delta C\right)^\tau x_{f,0}$. Stability of \eqref{eq:fin-dim-syst-pert} and Proposition \ref{prop:FinStab} imply that $\Nl x_f(\tau) \Nr_{\KK^n} \leq M \mathrm{e}^{-\omega \tau}  \Nl x_{f,0} \Nr_{\KK^n}$ for some constants $M\geq 1$ and $\omega>0$.
     Thus, \begin{align*}
           \Nl \left(M_A+M_B(I-M_\Delta M_D)^{-1} M_\Delta M_C\right)^\tau\Nr_{\mathcal{L}(\L^2([0,1]; \KK^n))}=&\Nl M_{\left(A+B(I-\Delta D)^{-1}\Delta C\right)^\tau} \Nr_{\mathcal{L}(\L^2([0,1]; \KK^n))} \\
          =&\Nl \left(A+B(I-\Delta D)^{-1}\Delta C\right)^\tau \Nr_{\KK^{n\times n}} \\
          \leq & M \mathrm{e}^{-\omega \tau}.\qedhere
     \end{align*}
    \end{proof}

The previous proposition is not limited to systems with matrices $A, B, C$ and $D$ as defined in \eqref{eq:Mat}, it is valid for arbitrary matrices $A, B, C$ and $D$. Proposition~\ref{prop:Stab-Dis-Fin} allows us to state the following theorem that shows the equivalence between the stability radii of the network of waves and the associated finite-dimensional system.
\begin{theorem}\label{thm:StabRadiEq}
Let $\Delta\in\KK^{m \times k}$ and assume that $I-\Delta D$ is invertible.
For the perturbed system \eqref{eq:WN-P0-pert}, the following statements are equivalent:
\begin{itemize}
\item it is asymptotically stable,
\item it is exponentially stable,
\item the finite-dimensional system \eqref{eq:fin-dim-syst-pert} is stable.
\end{itemize}

If the system \eqref{eq:WN-P0} (equiv.~\eqref{eq:fin-dim-syst}) is stable, then it holds that $r_\KK\eqref{eq:WN-P0}=  r_\KK\eqref{eq:fin-dim-syst}$.
\end{theorem}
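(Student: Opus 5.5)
The plan is to chain the equivalences already established, moving step by step from the original PDE to the finite-dimensional system. Fix $\Delta\in\KK^{m\times k}$ with $I-\Delta D$ invertible, so that \eqref{eq:WN-P0-pert} is well-posed by Corollary \ref{lem:Well-Posedness_Cl}.

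The first step passes from \eqref{eq:WN-P0-pert} to \eqref{eq-WN-0-pert}. Proposition \ref{prop:Simpl} gives the relation $x(\xi,t)=P(\xi)\tilde{x}(\xi,t)$ with $\tilde{x}=Q x$. By Lemma \ref{lem:ZustTrafo}, $P$ and $Q=P^{-1}$ are absolutely continuous on $[0,1]$, and hence uniformly bounded. Multiplication by $P$ is therefore an isomorphism of $\L^2([0,1];\KK^n)$. The feedback $u=\Delta y$ is the same in both formulations, since the outputs coincide, so the closed loops correspond to each other. It follows that asymptotic, respectively exponential, stability of \eqref{eq:WN-P0-pert} is equivalent to that of \eqref{eq-WN-0-pert}. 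Here I should check that the boundary condition of \eqref{eq:WN-P0-pert}, after substituting $x=P\tilde{x}$ and multiplying by $\bigl(K-\left[\begin{smallmatrix}0\\ I\end{smallmatrix}\right]\Delta K_y\bigr)^{-1}$, is exactly the condition written in \eqref{eq-WN-0-pert}. This is a short matrix computation using the push-through identity $(I-\left[\begin{smallmatrix}0\\ I\end{smallmatrix}\right]\Delta K_yK^{-1})^{-1}\left[\begin{smallmatrix}0\\ I\end{smallmatrix}\right]=\left[\begin{smallmatrix}0\\ I\end{smallmatrix}\right](I-\Delta D)^{-1}$.

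Next, Lemma \ref{lem:Stab-Wn-Disc} shows that stability of \eqref{eq-WN-0-pert} in either sense is equivalent to the corresponding stability of \eqref{eq:WN-disc-pert}. Proposition \ref{prop:Stab-Dis-Fin} then shows that asymptotic stability, exponential stability of \eqref{eq:WN-disc-pert} and stability of \eqref{eq:fin-dim-syst-pert} are all equivalent. Chaining these gives the three-way equivalence in the statement.

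For the stability radii, I compare the two sets over which the infimum is taken. \eqref{eq:WN-P0} is stable if and only if \eqref{eq:fin-dim-syst} is stable; this is the case $\Delta=0$ of the equivalence above. The matrix $D$ in Definition \ref{def:stab_radius} is the same matrix as in Definition \ref{def:stab_rad}. For each $\Delta$, the condition "$I-\Delta D$ is not invertible, or the perturbed system is not stable" therefore holds for \eqref{eq:WN-P0-pert} exactly when it holds for \eqref{eq:fin-dim-syst-pert}: the first alternative is literally identical, and when $I-\Delta D$ is invertible the second alternative is equivalent by the first part of the theorem. The two sets of admissible destabilizing $\Delta$ thus coincide, so their infima of $\Nl\Delta\Nr$ agree, giving $r_\KK\eqref{eq:WN-P0}=r_\KK\eqref{eq:fin-dim-syst}$.

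The only step needing real care is the first one, verifying that the $P_0$-elimination commutes with closing the loop. I expect this to be routine bookkeeping with the push-through identity; everything else is direct citation.
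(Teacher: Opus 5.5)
Your proposal is correct and follows essentially the same route as the paper, which chains Lemma \ref{lem:Stab-Wn-Disc} with Proposition \ref{prop:Stab-Dis-Fin} and then notes that the sets of destabilizing $\Delta$ coincide. The only addition is that you spell out the passage from \eqref{eq:WN-P0-pert} to \eqref{eq-WN-0-pert} via Proposition \ref{prop:Simpl} and the push-through identity, a step the paper leaves implicit, and your matrix computation there is correct.
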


\begin{proof}
    The stability results are obtained by invoking Proposition \ref{prop:Stab-Dis-Fin} and Lemma \ref{lem:Stab-Wn-Disc}. From that, we get that the stability radii of \eqref{eq:WN-P0} and \eqref{eq:fin-dim-syst} coincide.
\end{proof}

Theorem \ref{thm:StabRadiEq} enables us to prove our main results.
\begin{proof}[Proof of Proposition \ref{prop:MainresStab}]
The proof follows from Theorem \ref{thm:StabRadiEq} and the matrix condition follows from Proposition \ref{prop:FinStab}.
\end{proof}
\begin{proof}[Proof of Theorem \ref{thm:MainresRob}]
The stability formulas are obtained by Theorem \ref{thm:StabRadiEq} and Proposition \ref{prop:StabRadiFin}.
\end{proof}

\begin{remark}\label{rem:MinNormPertINFITY}
    Theorem \ref{thm:StabRadiEq} enables us to use the perturbations constructed in Remark \ref{rem:MinNormPertforFiniteDimSyst} for \eqref{eq:fin-dim-syst} to destabilize \eqref{eq:WN-P0-pert} with minimal norm.
\end{remark}

\section{The dual system and its application to controllability and observability}\label{sec:DualAndContrObs}
This section is dedicated to calculate the dual system of \eqref{eq:WN-0} (equiv.~\eqref{eq:WN-P0}). Furthermore, we apply the discrete-time approach used for the computation of the stability radii to the analysis of controllability and observability. Starting from the discrete-time formulation, the dual system associated with \eqref{eq:WN-disc} is readily seen to be given by
\begin{align}\tag{\text{$\Sigma_d^\ast$}}\label{eq:WN-disc-dual}\begin{split}
x_d(\tau+1) &= M_{A}^\ast x_d(\tau) + M_{C}^\ast y_d(\tau),\quad \tau\in\NN_0,\quad x_d(0) = x_{d,0},\\
u_d(\tau) &= M_{B}^\ast x_d(\tau) + M_{D}^\ast y_d(\tau), \quad \tau\in\NN_0,
\end{split}
\end{align} with input $y_d(\tau)\in\L^2([0,1];\KK^k)$ and output $u_d(\tau)\in\L^2([0,1];\KK^m)$, $\tau\in\NN_0$.
    From Proposition \ref{satz:Diskretisierung} we see that \eqref{eq:WN-disc-dual} is equivalent to the system 
    \begin{align}\tag{\text{$\Sigma_0^\ast$}}\label{eq:WN_Dual_from_Disc}
\begin{split}
\frac{\partial \hat{x}}{\partial t}(\xi, t) &= -\frac{\partial}{\partial \xi} \left( \lambda_0(\xi)\hat{x}(\xi, t)\right),   \quad t\geq 0, \xi\in [0,1], \quad \hat{x}(\xi,0) = \hat{x}_0(\xi),  \quad\xi\in [0,1], \\
C^\ast y(t)&=\lambda_0(0)\hat{x}(0, t)-A^\ast\lambda_0(1)\hat{x}(1,t),  \quad t\geq 0,\\
u(t)&=B^\ast\lambda_0(1)\hat{x}(1, t)+D^\ast y(t), \quad t\geq 0,
\end{split}
\end{align}
where $y(t)\in\KK^k$ is the input and $u(t)\in\KK^m$ is the output, $t\geq 0$. One way to verify that \eqref{eq:WN_Dual_from_Disc} is indeed the dual of \eqref{eq:WN-0} is to use the system node approach, which provides an elegant framework for this verification.
 We note  that \eqref{eq:WN-0} defines a system node $S:D(S)\subset X\times\KK^m\to X\times \KK^k$,  given by
        \begin{align*}
            S\left(\begin{smallmatrix}
                \tilde{x}\\ u 
                \end{smallmatrix}\right)&=\left(\begin{smallmatrix}
                    -\frac{\d}{\d \xi}(\lambda_0\tilde{x}) \\ C(\lambda_0\tilde{x})(1)+D u
                \end{smallmatrix} \right),\\
                D(S)&=\left\{\left[\begin{smallmatrix}\tilde{x}\\ u\end{smallmatrix}\right]\in Z\times\mathbb{K}^m: (\lambda_0 \tilde{x})(0) - A(\lambda_0 \tilde{x})(1)= B u\right\},
        \end{align*}
        see~\cite[Sec.~3]{HasA2025a2}. Here, $X$ is the Hilbert space $\L^2([0,1];\KK^n)$ equipped with $\Sl f,g \Sr_X:= \int_0^1 g^\ast \lambda_0 f\d\xi$ and $Z$ is defined as in Definition \ref{defi:well-posed}.
        The dual of \eqref{eq:WN-0} may be defined as the system which corresponds to the system node $S^\ast$, see~\cite[Thm.~3.5]{StafO2004a}. Here, by $S^\ast$, we mean the Hilbert space adjoint of $S$, whose domain is defined by
        \begin{equation}
        D(S^*)=\left\{b\in X\times \KK^k: \exists c\geq 0 \text{ such that } \vert\langle Sa,b\rangle\vert\leq c\Vert a\Vert_{X\times\KK^m}\,\, \forall a\in D(S) \right\}
        \label{eq:D(S*)}.
        \end{equation}
        If we pick $b\in D(S^*)$, $S^*b$ is the unique element $q\in X\times \KK^m$ such that $\Sl Sa,b \Sr = \Sl a, q\Sr$ for all $a\in D(S)$. Our main result on the dual of \eqref{eq:WN-0} is stated in the next theorem.
\begin{theorem}\label{prop:DualViaSysNode}
    The dual system of \eqref{eq:WN-0} is given by \eqref{eq:WN_Dual_from_Disc}. In particular, the adjoint of the system node $S$ corresponding to \eqref{eq:WN-0} is given by 
    \begin{align}
            &S^\ast\left(\begin{smallmatrix}
                \hat{x}\\ y 
                \end{smallmatrix}\right)=\left(\begin{smallmatrix}
                    \frac{\d}{\d \xi}(\lambda_0\hat{x}) \\ B^\ast(\lambda_0\hat{x})(0)+D^\ast y
                \end{smallmatrix} \right),\label{eq:S_star}\\
                &D(S^*)=\left\{\left[\begin{smallmatrix}\hat{x}\\ y\end{smallmatrix}\right]\in Z\times\mathbb{K}^k: (\lambda_0 \hat{x})(1)=A^\ast(\lambda_0 \hat{x})(0) + C^\ast y\right\},\label{eq:H}
        \end{align} which is the system node associated to the dual system of \eqref{eq:WN-0}.
\end{theorem}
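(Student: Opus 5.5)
We prove the identity \eqref{eq:S_star}--\eqref{eq:H} in two inclusions: first show that every element of the right-hand side of \eqref{eq:H} lies in $D(S^\ast)$ and that $S^\ast$ acts there by \eqref{eq:S_star}; then show that every $b\in D(S^\ast)$ already has this form. Throughout we use the inner product $\langle f,g\rangle_X=\int_0^1 g^\ast\lambda_0 f\,\d\xi$ on $X$ and the Euclidean ones on $\KK^m,\KK^k$.

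\emph{Integration by parts.} Let $a=(\tilde x,u)\in D(S)$ and $b=(\hat x,y)\in Z\times\KK^k$. Since $\lambda_0\tilde x,\lambda_0\hat x\in\H^1$, the weight $\lambda_0$ in the inner product combines with the derivative and we obtain
\begin{align*}
\langle Sa,b\rangle&=-\int_0^1 (\lambda_0\hat x)^\ast\tfrac{\d}{\d\xi}(\lambda_0\tilde x)\,\d\xi+y^\ast\bigl(C(\lambda_0\tilde x)(1)+Du\bigr)\\
&=\int_0^1 \bigl(\tfrac{\d}{\d\xi}(\lambda_0\hat x)\bigr)^\ast\lambda_0\tilde x\,\d\xi-(\lambda_0\hat x)(1)^\ast(\lambda_0\tilde x)(1)+(\lambda_0\hat x)(0)^\ast(\lambda_0\tilde x)(0)\\
&\quad+y^\ast C(\lambda_0\tilde x)(1)+y^\ast Du.
\end{align*}
Next, eliminate $(\lambda_0\tilde x)(0)$ using $(\lambda_0\tilde x)(0)=A(\lambda_0\tilde x)(1)+Bu$. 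After this substitution the boundary terms equal
\[
\bigl[-(\lambda_0\hat x)(1)+A^\ast(\lambda_0\hat x)(0)+C^\ast y\bigr]^\ast(\lambda_0\tilde x)(1)+\bigl[B^\ast(\lambda_0\hat x)(0)+D^\ast y\bigr]^\ast u .
\]
Hence, if $(\hat x,y)$ satisfies the boundary condition in \eqref{eq:H}, the first bracket vanishes. Then $\langle Sa,b\rangle=\langle a,q\rangle$ with $q=\bigl(\tfrac{\d}{\d\xi}(\lambda_0\hat x),\,B^\ast(\lambda_0\hat x)(0)+D^\ast y\bigr)$. Here the first component of $q$ is read in $X$: the weighted inner product requires that component to be $\lambda_0^{-1}$ times the $\L^2$ derivative. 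We need to check this normalization carefully. If it turns out that \eqref{eq:S_star} is meant with the plain $\L^2$ pairing, the bookkeeping adjusts accordingly. In either case $b\in D(S^\ast)$ and $S^\ast b=q$, which gives the first inclusion.

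\emph{Converse (main obstacle).} Let $b=(\hat x,y)\in D(S^\ast)$ with $S^\ast b=(g,v)$. First test with $a=(\tilde x,0)$, where $\lambda_0\tilde x\in\H^1_0$; these lie in $D(S)$. The identity $\langle Sa,b\rangle=\langle a,S^\ast b\rangle$ then says that $\lambda_0\hat x$ has a weak derivative in $\L^2$, namely $\lambda_0 g$ up to the weight. Therefore $\hat x\in Z$ and the first component formula holds. With this regularity, the integration by parts above is legitimate for all $a\in D(S)$. Comparing both sides leaves the boundary identity
\[
\bigl[-(\lambda_0\hat x)(1)+A^\ast(\lambda_0\hat x)(0)+C^\ast y\bigr]^\ast w+\bigl[B^\ast(\lambda_0\hat x)(0)+D^\ast y-v\bigr]^\ast u=0
\]
for all admissible $(w,u)$, where $w=(\lambda_0\tilde x)(1)$. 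The traces $w=(\lambda_0\tilde x)(1)\in\KK^n$ and $u\in\KK^m$ can be chosen independently: given any $w$ and $u$, pick $\lambda_0\tilde x\in\H^1$ with prescribed values $w$ at $\xi=1$ and $Aw+Bu$ at $\xi=0$. So both brackets vanish, which gives the boundary condition in \eqref{eq:H} and $v=B^\ast(\lambda_0\hat x)(0)+D^\ast y$. This proves equality of the domains.

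Finally, \cite[Thm.~3.5]{StafO2004a} identifies the system with node $S^\ast$ as the dual. Rewriting $S^\ast$ in the time direction (reflection $\xi\mapsto1-\xi$, or equivalently reading the generator with reversed orientation as in the discrete-time dual \eqref{eq:WN-disc-dual}) yields \eqref{eq:WN_Dual_from_Disc}, with input $y$ and output $u$.
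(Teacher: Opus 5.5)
Your argument is essentially the paper's. You integrate by parts, eliminate $(\lambda_0\tilde x)(0)$ through the boundary condition in $D(S)$ to isolate the two boundary brackets, get $\lambda_0\hat x\in\H^1$ by testing with $u=0$ and vanishing traces, and finish with the reflection $\xi\mapsto 1-\xi$. The only real difference is how you force the bracket $A^\ast(\lambda_0\hat x)(0)-(\lambda_0\hat x)(1)+C^\ast y$ to vanish. You use the adjoint identity together with the independent choice of $(\lambda_0\tilde x)(1)$ and $u$. The paper instead uses the bound in \eqref{eq:D(S*)} and the unboundedness of $\tilde x\mapsto(\lambda_0\tilde x)(1)$ on $X$. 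Both work, and your version identifies the output component $B^\ast(\lambda_0\hat x)(0)+D^\ast y$ at the same time.

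Your normalization remark, however, is wrong and should be removed rather than left open. With $\langle f,g\rangle_X=\int_0^1 g^\ast\lambda_0 f\,\d\xi$ the weight is absorbed exactly:
$\int_0^1\bigl(\tfrac{\d}{\d\xi}(\lambda_0\hat x)\bigr)^\ast\lambda_0\tilde x\,\d\xi=\bigl\langle\tilde x,\tfrac{\d}{\d\xi}(\lambda_0\hat x)\bigr\rangle_X$.
So no factor $\lambda_0^{-1}$ appears in the first component of $S^\ast$. The same holds in the converse. Writing $\phi=\lambda_0\tilde x\in\H^1_0$, the adjoint identity reads $-\int_0^1(\lambda_0\hat x)^\ast\phi'\,\d\xi=\int_0^1 g^\ast\phi\,\d\xi$. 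Hence the weak derivative of $\lambda_0\hat x$ is exactly $g$, not $\lambda_0 g$. This normalization is what makes the integral terms cancel before you read off your final boundary identity, so you need to state it rather than hedge it.
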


\begin{proof}
        Denote the right-hand side of \eqref{eq:H} by $H$. We start by showing that $D(S^*)\subseteq H$ and that \eqref{eq:S_star} holds.
       In order to calculate the adjoint of $S$, let $\left( \begin{smallmatrix}
            \tilde{x}\\u
        \end{smallmatrix}\right)\in D(S)$ and let $\left( \begin{smallmatrix}
            \hat{x}\\  y 
        \end{smallmatrix}\right)\in X\times \KK^k$. We have that\begin{align}\label{eq:CalcAdjFirstPart}
            \Sl S\left( \begin{smallmatrix}
            \tilde{x}\\u
        \end{smallmatrix}\right),  \left( \begin{smallmatrix}
            \hat{x}\\  y 
        \end{smallmatrix}\right)\Sr_{X\times \KK^k}= -\int_0^1 \lambda_0\hat{x}^\ast \frac{\d}{\d\xi}(\lambda_0\tilde{x}) \d\xi + y^\ast C(\lambda_0\tilde{x})(1)+y^\ast D u.
        \end{align}
        It holds that $\left( \begin{smallmatrix}
            \hat{x}\\  y 
        \end{smallmatrix}\right)\in D(S^\ast)$ if and only if there exists $\left( \begin{smallmatrix}
            x'\\  u' 
        \end{smallmatrix}\right)\in X\times \KK^m$ such that $\Sl S\left( \begin{smallmatrix}
            \tilde{x}\\u
        \end{smallmatrix}\right),  \left( \begin{smallmatrix}
            \hat{x}\\  y 
        \end{smallmatrix}\right)\Sr_{X\times \KK^k}= \Sl \left( \begin{smallmatrix}
            \tilde{x}\\u
        \end{smallmatrix}\right),  \left( \begin{smallmatrix}
            x'\\  u' 
        \end{smallmatrix}\right)\Sr_{X\times \KK^m}$ for every $\left( \begin{smallmatrix}
            \tilde{x}\\u
        \end{smallmatrix}\right)\in D(S)$. If we consider $\left( \begin{smallmatrix}
            \tilde{x}\\u
        \end{smallmatrix}\right)\in D(S)$ with $u=0$ and $\tilde{x}(1)=0$ we obtain with \eqref{eq:CalcAdjFirstPart} that $-\int_0^1 \lambda_0\hat{x}^\ast \frac{\d}{\d\xi}(\lambda_0\tilde{x}) \d\xi= \int x'^\ast \lambda_0\tilde{x}\d\xi$ holds for every such $\tilde{x}$. Therefore we have $\lambda_0\hat{x}\in\H^1([0,1];\KK^n)$ and especially $D(S^\ast)\subset Z$.
        Going back to \eqref{eq:CalcAdjFirstPart} we can now assume that $\lambda_0\hat{x}\in \H^1([0,1];\KK^n)$. By using integration by parts and the equality $(\lambda_0 \tilde{x})(0) - A(\lambda_0 \tilde{x})(1)= B u$, we get \begin{align}
             &\Sl S\left( \begin{smallmatrix}
            \tilde{x}\\u
        \end{smallmatrix}\right),  \left( \begin{smallmatrix}
            \hat{x}\\  y 
        \end{smallmatrix}\right)\Sr_{X\times \KK^k}\nonumber \\
       =&
        (\lambda_0\hat{x}^\ast)(0) (\lambda_0\tilde{x})(0)-(\lambda_0\hat{x}^\ast)(1) (\lambda_0\tilde{x})(1)+ \int_0^1 \left(\frac{\d}{\d\xi}(\lambda_0\hat{x}^\ast)\right) \lambda_0\tilde{x} \d\xi + y^\ast C(\lambda_0\tilde{x})(1)+y^\ast D u\nonumber\\
         =& 
        [A^\ast (\lambda_0\hat{x})(0) -(\lambda_0\hat{x})(1)   + C^\ast y ]^\ast(\lambda_0\tilde{x})(1) +[ B^\ast (\lambda_0\hat{x})(0)+ D^\ast y ]^\ast u 
        + \int_0^1 \left(\frac{\d}{\d\xi}(\lambda_0\hat{x})\right)^\ast \lambda_0\tilde{x} \d\xi.\label{eq:Int_Adjoint}
        \end{align}
        Since the mapping $\tilde{x}\mapsto (\lambda_0\tilde{x})(1)$ is not bounded on $X$, we conclude with the definition of $D(S^*)$, see \eqref{eq:D(S*)}, that $\left( \begin{smallmatrix}
            \hat{x}\\  y 
        \end{smallmatrix}\right)\in D(S^\ast)$
        if and only if $A^\ast (\lambda_0\hat{x})(0) -(\lambda_0\hat{x})(1)   + C^\ast y =0$. Thus we get that the adjoint operator is given by \eqref{eq:S_star} and we obtain $D(S^*)\subseteq H$. The inclusion $H\subseteq D(S^*)$ follows directly from the definition of $H$ and from \eqref{eq:Int_Adjoint}. 
Finally, we observe that $(S^*,D(S^*))$ defined in \eqref{eq:S_star} and \eqref{eq:H} corresponds to
\begin{align}\label{eq:WN-0-Dual}
\begin{split}
\frac{\partial \hat{x}}{\partial t}(\xi, t) &= \frac{\partial}{\partial \xi} \left( \lambda_0(\xi)\hat{x}(\xi, t)\right),   \quad t\geq 0, \xi\in [0,1], \quad \hat{x}(\xi,0)= \hat{x}_0(\xi),  \quad\xi\in [0,1], \\
C^\ast y(t)&=\lambda_0(1)\hat{x}(1, t)-A^\ast\lambda_0(0)\hat{x}(0,t),  \quad t\geq 0,\\
u(t)&=B^\ast\lambda_0(0)\hat{x}(0, t)+D^\ast y(t), \quad t\geq 0,
\end{split}
\end{align} with input $y(t)\in\KK^k$ and output $u(t)\in\KK^m$, $t\geq 0$. 
The proof concludes by noting, that the change of variables $\xi\to1-\xi$ shows that \eqref{eq:WN-0-Dual} is equivalent to \eqref{eq:WN_Dual_from_Disc}.
    \end{proof}
 
An illustration of the previous theorem can be found in Figure \ref{fig:propDual}.
\begin{figure}
\begin{center}
{\normalsize
\begin{tikzcd}[row sep=large,column sep=huge]
\eqref{eq:WN-0} \arrow[r, Leftrightarrow, "\text{$*$}","\text{Thm.~\ref{prop:DualViaSysNode}}"'] \arrow[d, Leftrightarrow, "\text{Lem.~\ref{satz:Diskretisierung}}"]
& \eqref{eq:WN_Dual_from_Disc} \arrow[d, Leftrightarrow, "\text{Lem.~\ref{satz:Diskretisierung}}"] \\
\eqref{eq:WN-disc} \arrow[r, Leftrightarrow, "\text{$*$}"]
& \eqref{eq:WN-disc-dual}
\end{tikzcd}}
\end{center}
\caption{Illustration of Theorem \ref{prop:DualViaSysNode}.\label{fig:propDual}}
\end{figure}

The remainder of this section is devoted to applying the duality result to the concepts of controllability and observability.
For infinite-dimensional systems, it is customary to distinguish between exact and approximate controllability/observability, by which we mean the following concepts.
\begin{definition}\label{def:controllability_WN}\begin{itemize}
    \item \eqref{eq:WN-P0} is said to be \emph{exactly controllable} if there exists $t\in[0,\infty)$ such that the system can be steered between two arbitrary states in time $t$, i.e., for all $x_0,x_1\in \L^2([0,1];\KK^n)$ there exists an input function $u\in\L^2([0,t];\KK^m)$ such that $x(\cdot,t)=x_1$. 

 The system \eqref{eq:WN-P0} is called \emph{approximately controllable} if there exists $t\in[0,\infty)$ such that the system can be steered from any state arbitrarily close to any other state in time $t$, i.e., for all $x_0,x_1\in \L^2([0,1];\KK^n)$ and every $\epsilon>0$ there exists $u\in\L^2([0,t];\KK^m)$ such that $\Nl x(\cdot,t)-x_1\Nr_{\L^2([0,1]; \KK^n)}<\epsilon$.

 \item \eqref{eq:WN-P0} is \emph{approximately observable} if there exists $t\in[0,\infty)$ such that the initial condition $x_0$ can be constructed from the output $y$ on $[0,t]$.
    If $x_0$ can be constructed continuously from $y$ on $[0,t]$, \eqref{eq:WN-P0} is \emph{exactly observable}.
\end{itemize}
\end{definition}

It is well-known that controllability and observability are dual concepts, which means the following, see e.g.~\cite[Sec.~11.2]{TucM2009b}.
\begin{proposition}\label{prop:DualInfDimConti}
The system \eqref{eq:WN-0} is exactly or approximately observable, if and only if the dual system \eqref{eq:WN_Dual_from_Disc} is exactly or approximately controllable, respectively.
\end{proposition}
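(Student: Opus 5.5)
This statement is the abstract duality theorem for well-posed linear systems (as in Tucsnak--Weiss, Sec.~11.2), specialized to \eqref{eq:WN-0} and its dual \eqref{eq:WN_Dual_from_Disc}. The plan is to reduce it to that general result by identifying \eqref{eq:WN_Dual_from_Disc} with the dual well-posed system. Then the standard adjointness relations between the input map, the output map and their duals do the rest.

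The first step is to set up well-posedness of both systems. By Proposition~\ref{satz:well-posedness}, \eqref{eq:WN-0} is well-posed. Its system node is the operator $S$ described before Theorem~\ref{prop:DualViaSysNode}, on $X=\L^2([0,1];\KK^n)$ with the weighted inner product $\Sl f,g\Sr_X=\int_0^1 g^\ast\lambda_0 f\,\d\xi$. Since this norm is equivalent to the standard $\L^2$-norm, all notions of exact and approximate controllability and observability are unaffected by the choice. Theorem~\ref{prop:DualViaSysNode} identifies $S^\ast$ as the system node of \eqref{eq:WN_Dual_from_Disc}, after the reflection $\xi\mapsto 1-\xi$. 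By \cite[Thm.~3.5]{StafO2004a}, the adjoint of a well-posed system node is again well-posed. Equivalently, \eqref{eq:WN_Dual_from_Disc} has the same structure as \eqref{eq:WN-0}, with matrices $(A^\ast,C^\ast,B^\ast,D^\ast)$, so it is well-posed by Proposition~\ref{satz:well-posedness}.

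Next I would use the adjointness identity for the dual system. On a horizon $t$, let $\Psi_t:X\to\L^2([0,t];\KK^k)$ be the output map of \eqref{eq:WN-0} and $\Phi_t^d:\L^2([0,t];\KK^k)\to X$ the input map of the dual system. The general theory for the dual well-posed system gives $(\Psi_t)^\ast=\Phi_t^d\circ\mathbf{R}_t$, where $\mathbf{R}_t$ is the time reflection on $[0,t]$, which is unitary. A cross-check is available from the discrete-time picture (Proposition~\ref{satz:Diskretisierung} and \eqref{eq:WN-disc-dual}). There the output map on $\tau$ steps is the block column $(M_CM_A^j)_{j<\tau}$, the dual input map is $\sum_j (M_A^\ast)^{\tau-1-j}M_C^\ast(\cdot)_j$, and these are manifestly adjoint up to reversing the order of the blocks.

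The conclusion then follows from two functional-analytic facts:
\begin{itemize}
\item Exact observability on $[0,t]$ means $\Psi_t$ is bounded below, which holds if and only if $\Psi_t^\ast$ is surjective, i.e.\ $\Phi_t^d$ is onto. Since the dual semigroup term can be absorbed, ontoness of $\Phi_t^d$ is exactly the dual system being exactly controllable in time $t$ in the sense of Definition~\ref{def:controllability_WN}.
\item Approximate observability means $\ker\Psi_t=\{0\}$, which holds if and only if $\ran\Psi_t^\ast$ is dense, i.e.\ approximate controllability of the dual system.
\end{itemize}

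I expect the main obstacle to be the adjointness identity in the second step, specifically getting the time reflection and the reflection $\xi\mapsto1-\xi$ consistent with the weighted inner product. I would handle this by citing \cite[Sec.~11.2]{TucM2009b}, or by verifying the identity on the discrete-time level as above.

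A second subtlety concerns the controllability definition, which steers an arbitrary $x_0$ to an arbitrary $x_1$, rather than starting from zero. The reachability formulation is $\ran\Phi_t^d=X$, and for an arbitrary $x_0$ one simply steers $0$ to $x_1-\mathbb{T}_t x_0$, so the two formulations agree. Similarly, "constructed continuously" is to be read as bounded invertibility of $\Psi_t$ on its range.
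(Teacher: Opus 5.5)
Your proposal is correct and follows the same route as the paper. The paper proves this proposition only by citing the abstract duality between observability and controllability in \cite[Sec.~11.2]{TucM2009b}, with the dual system identified through the system-node adjoint of Theorem~\ref{prop:DualViaSysNode}; your extra details (the adjoint relation between the output map and the time-reflected dual input map, the closed-range and density arguments, absorbing the free semigroup term, and the equivalence of the weighted and standard norms) are sound and make explicit what the paper leaves to that citation.
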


Our main result concerning controllability and observability shows that the two controllability/oberserva\-bility concepts are equivalent for \eqref{eq:WN-P0}. Moreover, this equivalence can be characterized in terms of certain basic matrix conditions.

\begin{theorem}\label{thm:MainresContr}
   Let the matrices $A,B,C$ and $D$ be defined as in \eqref{eq:Mat}. Then the following statements concerning controllability are equivalent:
\begin{itemize}
     \item \eqref{eq:WN-P0} is exactly controllable.
     \vspace{1mm}
        \item \eqref{eq:WN-P0} is approximately controllable.
        \vspace{1mm}
        \item 
$\rank\left[\begin{smallmatrix}
B & AB & A^2B &\cdots &A^{n-1}B 
\end{smallmatrix}\right] = n$.
\vspace{1mm}
      \item $\rank\left[\begin{smallmatrix}
            \lambda-A & B
          \end{smallmatrix}\right] = n$
        for every $\lambda\in\CC$.
        \vspace{1mm}
            \item $\rank\left[\begin{smallmatrix}
           \lambda-A & B
          \end{smallmatrix}\right] = n$
        for every $\lambda\in\sigma(A)$.
\end{itemize}
For the observability we have the equivalence between the following statements:
\begin{itemize}
     \item \eqref{eq:WN-P0} is exactly observable.
        \item \eqref{eq:WN-P0} is approximately observable.
        \vspace{1mm}
        \item $\rank\left[\begin{smallmatrix} C\\ CA\\ \vdots \\ CA^{n-1}  \end{smallmatrix}\right] = n$.
        \vspace{1mm}
      \item $\rank\left[\begin{smallmatrix}
 \lambda-A \\ C
        \end{smallmatrix}\right] = n$
        for every $\lambda\in\CC$.
        \vspace{1mm}
            \item $\rank\left[\begin{smallmatrix}
             \lambda-A \\ C
          \end{smallmatrix}\right]=n$
      for every $\lambda\in\sigma(A)$.
\end{itemize}
\end{theorem}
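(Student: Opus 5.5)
We transfer controllability to the discrete-time level, as was done for stability. By Proposition \ref{prop:Simpl}, the change of variables $\tilde{x}=Qx$ with bounded, boundedly invertible $Q$ preserves both exact and approximate controllability. So it suffices to treat \eqref{eq:WN-0}. By Proposition \ref{satz:Diskretisierung} and Remark \ref{rem:stateRel}, every time $t>0$ can be enlarged to $t=\tau p(1)$ with $\tau\in\NN$. Controllability in time $t$ implies controllability in any larger time: first steer, then let the state evolve and account for it. Hence \eqref{eq:WN-0} is exactly (approximately) controllable if and only if \eqref{eq:WN-disc} is, for some $\tau$. Here the maps $x\mapsto\lambda_0(k^{-1}(\cdot))x(k^{-1}(\cdot))$ and the input reparametrisation $u\mapsto(u_d(\sigma))_\sigma$ are isomorphisms between the respective spaces. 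For \eqref{eq:WN-disc} the reachable set from $0$ in $\tau$ steps is the range of
\[
\Phi_\tau(u_d(0),\dots,u_d(\tau-1))=\sum_{\sigma=0}^{\tau-1}M_{A^{\tau-1-\sigma}B}\,u_d(\sigma),
\]
and this operator acts pointwise in $\zeta\in[0,1]$ as the finite Kalman map $\mathcal{C}_\tau=[B\ AB\ \cdots\ A^{\tau-1}B]$.

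The key step is a pointwise argument. Suppose $\rank[B\ \cdots\ A^{n-1}B]=n$. Then $\mathcal{C}_n$ has a right inverse $R\in\KK^{nm\times n}$. Given any target $x_1\in\L^2([0,1];\KK^n)$, the input $u=M_R(x_1-M_A^n x_0)$ lies in $\L^2$ and steers $x_0$ exactly to $x_1$ in $n$ steps, which gives exact controllability. Exact controllability trivially implies approximate controllability. For the remaining implication, suppose the rank is less than $n$. Then by Cayley--Hamilton there is $0\neq v\in\CC^n$ with $v^\ast A^jB=0$ for all $j\ge 0$. Consider the closed proper subspace $\{f: v^\ast f(\zeta)=0 \text{ a.e.}\}$ of $\L^2$, or for $\KK=\RR$ its real analogue defined via $\Re v,\Im v$. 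Its translate by $M_A^\tau x_0$ contains the reachable set for every $\tau$, so the system is not approximately controllable. This closes the cycle for the first three items. The Hautus conditions (items four and five) are equivalent to the Kalman rank condition by standard finite-dimensional theory; item five is equivalent to item four since $\lambda-A$ is invertible for $\lambda\notin\sigma(A)$.

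For observability we use duality. By Proposition \ref{prop:DualInfDimConti} and Theorem \ref{prop:DualViaSysNode}, exact/approximate observability of \eqref{eq:WN-0} is equivalent to exact/approximate controllability of \eqref{eq:WN_Dual_from_Disc}. That system has the same structure as \eqref{eq:WN-0} with $(A,B,C,D)$ replaced by $(A^\ast,C^\ast,B^\ast,D^\ast)$. Applying the controllability part to it, with the same discretisation argument since Proposition \ref{satz:Diskretisierung} only uses the form of \eqref{eq:WN-0}, gives the conditions $\rank[C^\ast\ A^\ast C^\ast\cdots]=n$ and $\rank[\lambda-A^\ast\ C^\ast]=n$. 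These transpose to the stated observability conditions, using $\rank J^\ast=\rank J$ and $\bar\lambda\in\sigma(A^\ast)$ if and only if $\lambda\in\sigma(A)$.

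The step we expect to be the main obstacle is the careful transfer of controllability between \eqref{eq:WN-0} and \eqref{eq:WN-disc}. Two points need checking there. First, non-integer times must be handled, via monotonicity in $t$ together with Remark \ref{rem:stateRel}. Second, the input spaces $\L^2([0,\tau p(1)];\KK^m)$ and $\L^2([0,1];\KK^m)^\tau$ must be isomorphic, which holds by the rescaling $t=(\sigma+\zeta)p(1)$. A smaller point is the real case $\KK=\RR$ when constructing the annihilating functional; there we take the real or imaginary part of $v$.
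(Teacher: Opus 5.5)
Your proposal is correct and follows the paper's route. You reduce to \eqref{eq:WN-0} and then to \eqref{eq:WN-disc}, use that constant-matrix multiplication operators act pointwise, invoke the standard Kalman/Hautus equivalences, and get observability by applying the controllability result to the dual system of Theorem~\ref{prop:DualViaSysNode} together with Proposition~\ref{prop:DualInfDimConti}. The only difference is the order of the cycle. You go Kalman $\Rightarrow$ exact (pointwise right inverse) $\Rightarrow$ approximate $\Rightarrow$ Kalman (annihilating vector $v$). The paper goes exact $\Rightarrow$ finite-dimensional controllability (averaging constant targets) $\Rightarrow$ approximate (simple functions) $\Rightarrow$ exact (closed range $\L^2([0,1];\ran M)$). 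Your ordering is slightly more direct.

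One small fix: the monotonicity in time should be argued by first applying zero input and then steering from the freely evolved state (equivalently, the reachable set from $0$ grows with $t$ by time invariance). Steering first and then letting the state evolve does not work, since it needs the free evolution to be surjective, which fails for instance when $A=0$.
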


In \cite[Rem.~3.21]{ChiY2024a}, it is noted that exact and approximate controllability are equivalent for this class of systems, where $P_0$ is assumed to be diagonal. It is further claimed there that matrix tests can be derived by considering associated difference equations. For our class of systems, we provide an alternative proof of this equivalence, based on the results established in Section \ref{sec:DiscreteAppr}. Thanks to our approach, we state concrete matrix tests for controllability. Thanks to Theorem \ref{prop:DualViaSysNode}, we show that both observability concepts are equivalent for \eqref{eq:WN-P0} and can be tested by simple matrix tests. The proof of Theorem \ref{thm:MainresContr} can be found at the end of this section.\footnote{Theorem \ref{thm:MainresContr} is part of the master’s thesis of H. Wagener at the University of Wuppertal, 2025.} Before defining the controllability at the discrete-time level, we define the controllability operator for \eqref{eq:WN-0}.
\begin{remark}
Thanks to the linearity of \eqref{eq:WN-P0}, $x_0$ in Definition \ref{def:controllability_WN} can be chosen as $x_0 = 0$. 
Thus we may characterize the controllability of \eqref{eq:WN-0} by the controllability operator $\mathcal{B}_t^\eqref{eq:WN-0}: \L^2([0,t];\KK^m)\to \L^2([0,1];\KK^n)$ which maps the input function to the state $\tilde{x}(\cdot,t)$ of \eqref{eq:WN-0} at time $t$ with initial value $\tilde{x}_0=0$. Hence, by looking at Definition \ref{def:controllability_WN}, \eqref{eq:WN-0} is said to be exactly controllable if there exists $t>0$, such that $\ran \mathcal{B}_t^\eqref{eq:WN-0}=\L^2([0,1];\KK^n)$ and it is approximately controllable, if $\overline{\ran \mathcal{B}_t^\eqref{eq:WN-0}} = \L^2([0,1];\KK^n)$.
\end{remark}

We define the controllability for the discrete-time system \eqref{eq:WN-disc} by the operator which maps the input sequence $( u_d(0),\ldots, u_d(\tau-1))$ to the state $x_d(\tau)$.
\begin{definition}
    The \emph{controllability operator} of \eqref{eq:WN-disc} at time $\tau\in\NN$, denoted by $\mathcal{B}_\tau^\eqref{eq:WN-disc}:\left(\L^2([0,1];\KK^m)\right)^\tau\to\L^2([0,1];\KK^n)$, is defined by
    \begin{equation*}
    \mathcal{B}_\tau^\eqref{eq:WN-disc}\left( u_d(0),u_d(1),\ldots, u_d(\tau-1) \right) := \sum_{\sigma=0}^{\tau-1} M_A^{\tau-1-\sigma}M_B u_d(\sigma).
    \end{equation*}
    \eqref{eq:WN-disc} is said to be exactly controllable if $\ran\mathcal{B}_\tau^\eqref{eq:WN-disc}=\L^2([0,1];\KK^n)$. If $\ran\mathcal{B}_\tau^\eqref{eq:WN-disc}$ is dense in $\L^2([0,1];\KK^n)$ then \eqref{eq:WN-disc} is said to be approximately controllable.
\end{definition}

Note that the time in the definitions of controllability can be simply enlarged, i.e., it holds $\ran\mathcal{B}_t^\eqref{eq:WN-0}\subseteq \ran\mathcal{B}_s^\eqref{eq:WN-0}$ for $0<t<s$ and $\ran\mathcal{B}_\tau^\eqref{eq:WN-disc}\subseteq \ran\mathcal{B}_\sigma^\eqref{eq:WN-disc}$ for integers $0<\tau<\sigma$. The next lemma shows that our definitions of controllability on the continuous- and discrete-time levels are compatible.

\begin{lemma}\label{lem:ControlContDisc}
    \eqref{eq:WN-0} is exactly or approximately controllable if and only if \eqref{eq:WN-disc} is exactly or approximately controllable, respectively.
    \begin{proof}
        By Remark \ref{rem:stateRel} we have $\tilde{x}(\xi,t)=\lambda_0(\xi)^{-1} x_d\left(p(1)^{-1}t\right)(k(\xi))$ for\footnote{Note that you may enlarge the time $t$, such that $ p(1)^{-1}t \in \NN$.} $p(1)^{-1}t\in \NN$ and $\xi\in[0,1]$. Thus, we get $    \ran \mathcal{B}_t^\eqref{eq:WN-0}=\ran \mathcal{B}_{ p(1)^{-1}t }^\eqref{eq:WN-disc}$, since $k$ is a homeomorphism.
    \end{proof}
\end{lemma}

Our aim is to characterize the controllability of \eqref{eq:WN-disc} via the finite-dimensional system \eqref{eq:fin-dim-syst}.
Therefore we introduce the concept of controllability for \eqref{eq:fin-dim-syst}. 

\begin{definition}
   The controllability operator associated to \eqref{eq:fin-dim-syst} at time $\tau\in\NN_0$, denoted by $\mathcal{B}_\tau^\eqref{eq:fin-dim-syst}:(\KK^m)^\tau \to \KK^n$, is defined by $\mathcal{B}_\tau^\eqref{eq:fin-dim-syst} \left(u_f(0),u_f(1),\ldots,u_f(\tau-1)\right):= \sum_{\sigma=0}^{\tau-1} A^{\tau-1-\sigma}B u_f(\sigma)$.
\eqref{eq:fin-dim-syst} is said to be controllable if there exists $\tau\in\NN$ such that $\ran\mathcal{B}_\tau^\eqref{eq:fin-dim-syst}=\KK^n$.
\end{definition}
Since finite-dimensional spaces are closed, we do not have to distinguish between approximate or exact controllability. 
In the next proposition, we show that exact and approximate controllability of \eqref{eq:WN-disc} are equivalent to the controllability of \eqref{eq:fin-dim-syst}.
\begin{proposition}\label{prop:controlConnectFinDis}
    The following statements are equivalent:
\begin{enumerate}
    \item \eqref{eq:WN-disc} is exactly controllable.
    \item \eqref{eq:WN-disc} is approximately controllable.
    \item \eqref{eq:fin-dim-syst} is controllable.
\end{enumerate}
\end{proposition}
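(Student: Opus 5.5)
The plan is to prove the cycle 1.$\Rightarrow$2.$\Rightarrow$3.$\Rightarrow$1., in the spirit of Proposition~\ref{prop:Stab-Dis-Fin}. The key point is that the controllability operator $\mathcal{B}_\tau^{\eqref{eq:WN-disc}}$ acts pointwise in $\zeta\in[0,1]$ with the constant matrices of $\mathcal{B}_\tau^{\eqref{eq:fin-dim-syst}}$. Precisely,
\begin{align*}
\left(\mathcal{B}_\tau^{\eqref{eq:WN-disc}}(u_d(0),\ldots,u_d(\tau-1))\right)(\zeta) = \mathcal{B}_\tau^{\eqref{eq:fin-dim-syst}}\left(u_d(0)(\zeta),\ldots,u_d(\tau-1)(\zeta)\right) = \mathcal{C}_\tau \left[\begin{smallmatrix} u_d(\tau-1)(\zeta)\\ \vdots \\ u_d(0)(\zeta)\end{smallmatrix}\right]
\end{align*}
for almost every $\zeta\in[0,1]$, where $\mathcal{C}_\tau := \left[\begin{smallmatrix} B & AB & \cdots & A^{\tau-1}B\end{smallmatrix}\right]$. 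Thus $\mathcal{B}_\tau^{\eqref{eq:WN-disc}} = M_{\mathcal{C}_\tau}$ as a map $\L^2([0,1];\KK^{m\tau})\to\L^2([0,1];\KK^n)$, after identifying $(\L^2([0,1];\KK^m))^\tau$ with $\L^2([0,1];\KK^{m\tau})$.

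The implication 1.$\Rightarrow$2. is trivial. For 3.$\Rightarrow$1., suppose $\ran \mathcal{C}_\tau = \KK^n$ for some $\tau$. Then $\mathcal{C}_\tau$ has a right inverse $R\in\KK^{m\tau\times n}$, for instance $R=\mathcal{C}_\tau^\ast(\mathcal{C}_\tau\mathcal{C}_\tau^\ast)^{-1}$. Given any $x_1\in\L^2([0,1];\KK^n)$, the function $Rx_1$ lies in $\L^2([0,1];\KK^{m\tau})$ and satisfies $M_{\mathcal{C}_\tau}(Rx_1)=x_1$. Hence $\ran\mathcal{B}_\tau^{\eqref{eq:WN-disc}}=\L^2([0,1];\KK^n)$.

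The step needing a little care is 2.$\Rightarrow$3., and I would argue it by contraposition. Suppose \eqref{eq:fin-dim-syst} is not controllable. By Cayley--Hamilton, $\ran\mathcal{C}_\tau = \ran\mathcal{C}_n =: V$ for all $\tau\geq n$, and $V$ is a proper subspace of $\KK^n$. Since the ranges are nested, $V$ also contains $\ran\mathcal{C}_\tau$ for $\tau<n$. By the pointwise formula, every element of $\ran\mathcal{B}_\tau^{\eqref{eq:WN-disc}}$ takes values in $V$ almost everywhere. The set $\L^2([0,1];V)$ of such functions is a closed proper subspace of $\L^2([0,1];\KK^n)$. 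To see this, pick $0\neq v\in V^\perp$; the constant function $v$ is orthogonal to every element of $\L^2([0,1];V)$. Hence the range of $\mathcal{B}_\tau^{\eqref{eq:WN-disc}}$ is not dense for any $\tau$, which contradicts approximate controllability.

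The main obstacle is only bookkeeping. One must check that the pointwise identity holds almost everywhere and that multiplication by a constant matrix maps $\L^2$ into $\L^2$. One must also check that the existence of some time $\tau$ in the definitions of approximate and exact controllability is compatible with the Cayley--Hamilton reduction to $\tau=n$.
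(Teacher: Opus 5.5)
Your proof is correct, but it is organized differently from the paper's. The paper runs the cycle 1.$\Rightarrow$3.$\Rightarrow$2.$\Rightarrow$1. First, it extracts a finite-dimensional control from an exact infinite-dimensional one by averaging, $u_f(\sigma)=\int_0^1 u_d(\sigma)\,\d\zeta$, applied to a constant target. Next, it shows density by hitting every $c\chi_S$ and appealing to simple functions. Finally, it proves that $\ran\mathcal{B}_\tau=\L^2([0,1];\ran M)$ is closed, using a right inverse of $M$ onto its range, so that dense range forces surjectivity. (The paper's $M$ is your $\mathcal{C}_\tau$ with the blocks in reverse order.) You instead put the identity $\mathcal{B}_\tau=M_{\mathcal{C}_\tau}$ up front and use it in both directions. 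A right inverse of the full-rank matrix $\mathcal{C}_\tau$ gives 3.$\Rightarrow$1. in one line, which makes the paper's simple-function step unnecessary. A constant test function $v\in(\ran\mathcal{C}_\tau)^\perp\setminus\{0\}$ gives 2.$\Rightarrow$3. directly from density, so you need neither the averaging step nor a separate closed-range argument. Your pointwise formula is also the right justification for the inclusion $\ran\mathcal{B}_\tau\subseteq\L^2([0,1];\ran\mathcal{C}_\tau)$, which the paper attributes to a norm bound that does not by itself show the values lie in $\ran\mathcal{C}_\tau$. In exchange, the paper's closed-range step records the structural fact that the reachable set at time $\tau$ is always the closed subspace $\L^2([0,1];\ran\mathcal{C}_\tau)$, whether or not the system is controllable. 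Lastly, your Cayley--Hamilton reduction is harmless but superfluous. Approximate controllability concerns a single $\tau$, so for each fixed $\tau$ a nonzero vector orthogonal to $\ran\mathcal{C}_\tau$ already shows that $\ran\mathcal{B}_\tau$ is not dense.
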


\begin{proof}
    "1.$\Rightarrow$3.": Since we assume \eqref{eq:WN-disc} to be exactly controllable, we know that there exists a $\tau\in\NN$ such that $\ran \mathcal{B}_\tau^\eqref{eq:WN-disc}=\L^2([0,1];\KK^n)$. Let $x_f\in\KK^n$ and consider the constant function $x_d = x_f\in\L^2([0,1];\KK^n)$.
    By assumption there exists an input sequence $u_d(0),  u_d(1), \ldots, u_d(\tau-1)\in \L^2([0,1];\KK^m)$, such that we have 
$\mathcal{B}^\eqref{eq:WN-disc}_\tau \left(u_d(0),u_d(1),\ldots,u_d(\tau-1)\right)= \sum_{\sigma=0}^{\tau-1} A^{\tau-1-\sigma}B u_d(\sigma)= x_d$.
 Integration gives 
$\sum_{\sigma=0}^{\tau-1} A^{\tau-1-\sigma}B \int_0^1 u_d(\sigma) \d\lambda \allowbreak = \int_0^1 x_d \d\lambda =x_f$.
Thus by defining $u_f(\sigma):=\int_0^1 u_d(\sigma) \d\lambda\in\KK^m$ for all $\sigma=0,1,\ldots, \tau-1$, we obtain \linebreak
    $\mathcal{B}^\eqref{eq:fin-dim-syst}_\tau \left(u_f(0),\ldots, u_f(\tau-1)\right)=x_f$.
Since $x_f$ is arbitrary, we get $\ran \mathcal{B}^\eqref{eq:fin-dim-syst}_\tau=\KK^n$.

"3.$\Rightarrow$2.": Let us pick $\tau\in\NN$ such that $\ran \mathcal{B}^\eqref{eq:fin-dim-syst}_\tau=\KK^n$. Let $c\in\KK^n$ and consider $S\subset [0,1]$ measurable. Then there exist $u_f(0),u_f(1),\ldots,u_f(\tau-1)\in \KK^m$ such that 
$\mathcal{B}_\tau^\eqref{eq:fin-dim-syst} \left(u_f(0),u_f(1),\ldots,u_f(\tau-1)\right)= \sum_{\sigma=0}^{\tau-1} A^{\tau-1-\sigma}B u_f(\sigma)= c$.
We set $u_d(\sigma):= u_f(\sigma) \chi_S \in \L^2([0,1];\KK^m)$ for $\sigma=0,1,\ldots, \tau-1$, where
    $\chi_S: [0,1] \to \RR, s \mapsto \left\{\begin{smallmatrix} 1,& s\in S, \\ 0, & \text{else}. \end{smallmatrix}\right.$
With this we obtain $\mathcal{B}_\tau^\eqref{eq:WN-disc} \left(u_d(0),u_d(1),\ldots,u_d(\tau-1)\right)=c \chi_S$. Since $c\in\KK^n$ and $S\subset [0,1]$ were arbitrary and $\mathcal{B}_\tau^\eqref{eq:WN-disc}$ is linear, all elementary functions are in $\ran \mathcal{B}_\tau^\eqref{eq:WN-disc}$. Thus $\overline{\ran \mathcal{B}_\tau^\eqref{eq:WN-disc}}=\L^2([0,1];\KK^n)$ by the construction of the Lebesgue-integral.

"2.$\Rightarrow$1.": We show that the controllability operator $\mathcal{B}_\tau^\eqref{eq:WN-disc}$ has a closed range.
For the input sequence $u_d:=\left(u_d(0),u_d(1),\ldots,u_d(\tau-1)\right)^\T\in (\L^2([0,1];\KK^m))^\tau$ we have the representation 
$\mathcal{B}^\eqref{eq:WN-disc}_\tau u_d = \sum_{\sigma=0}^{\tau-1} A^{\tau-1-\sigma}B u_d(\sigma)= \left[\begin{smallmatrix}
A^{\tau-1}B & A^{\tau-2}B & \cdots & AB & B
\end{smallmatrix}\right]u_d =: Mu_d$.
Our claim is that $\ran \mathcal{B}_\tau^\eqref{eq:WN-disc}= \L^2\left([0,1]; \ran M \right)$, which would show the closedness of $\ran \mathcal{B}_\tau^\eqref{eq:WN-disc}$. We start by showing that $\ran \mathcal{B}_\tau^\eqref{eq:WN-disc}\subseteq \L^2\left([0,1]; \ran M \right)$. This follows from the estimation $\Nl\mathcal{B}^d_\tau u_d\Nr_{\L^2([0,1];\KK^n)}\leq \Nl \mathcal{B}^d_\tau\Nr \Nl u_d\Nr_{\left( \L^2([0,1];\KK^m) \right)^\tau}$. For the other direction, let $x_d\in \L^2([0,1];\ran M)$. Since $M: \KK^{\tau m} \to \ran M$ is surjective, we know that there exists $M^\mathfrak{r}:\ran M \to \KK^{\tau m}$ such that $ M M^\mathfrak{r}=I_{\ran M}$. We set $u_d:= M^\mathfrak{r}x_d$ and obtain $\mathcal{B}_\tau^d u_d =M u_d =x_d$, which implies 
$x_d\in \mathcal{B}_\tau^\eqref{eq:WN-disc}$.
\end{proof}

 The specific structure of $A,B,C$ and $D$ has not been used, which makes the previous statement applicable to arbitrary linear infinite-dimensional discrete-time systems with multiplication operators given by constant matrices.
Thanks to the equivalent representation of \eqref{eq:WN-P0} as an infinite-dimensional discrete-time system, we have the following theorem.
\begin{theorem}\label{thm:Control-WN-Fin}
    The following statements are equivalent:
    \begin{itemize}
        \item The infinite-dimensional system \eqref{eq:WN-P0} is exactly controllable.
        \item The infinite-dimensional system \eqref{eq:WN-P0} is approximately controllable.
        \item The finite-dimensional system \eqref{eq:fin-dim-syst} is controllable.
    \end{itemize}
\end{theorem}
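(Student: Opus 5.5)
The plan is to chain the equivalences already established. Controllability of \eqref{eq:WN-P0} is first transferred to \eqref{eq:WN-0} via the state transformation of Proposition~\ref{prop:Simpl}. Lemma~\ref{lem:ControlContDisc} then transfers it to the discrete-time system \eqref{eq:WN-disc}, and Proposition~\ref{prop:controlConnectFinDis} closes the chain with the finite-dimensional system \eqref{eq:fin-dim-syst}.

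The only step not yet written down explicitly is the first one. By Proposition~\ref{prop:Simpl}, solutions of \eqref{eq:WN-P0} and \eqref{eq:WN-Simp} (equivalently \eqref{eq:WN-0}, since $K$ is invertible) with the same input $u$ are related by $x(\xi,t)=P(\xi)\tilde x(\xi,t)$. Consider the multiplication operator $\mathcal{P}:\L^2([0,1];\KK^n)\to\L^2([0,1];\KK^n)$, $\tilde x\mapsto P(\cdot)\tilde x$. It is bounded with bounded inverse $\tilde x \mapsto Q(\cdot)\tilde x$, because $P$ and $Q=P^{-1}$ are absolutely continuous, hence bounded on $[0,1]$ (Lemma~\ref{lem:ZustTrafo}). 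Also, $\mathcal{P}$ maps $0$ to $0$. Therefore the controllability operators satisfy $\mathcal{B}_t^{\eqref{eq:WN-P0}}=\mathcal{P}\,\mathcal{B}_t^{\eqref{eq:WN-0}}$, so $\ran\mathcal{B}_t^{\eqref{eq:WN-P0}}=\mathcal{P}\,\ran\mathcal{B}_t^{\eqref{eq:WN-0}}$. Since $\mathcal{P}$ is an isomorphism of $\L^2([0,1];\KK^n)$, one range is the whole space (respectively, dense) if and only if the other is. Hence exact and approximate controllability of \eqref{eq:WN-P0} are equivalent to the corresponding notions for \eqref{eq:WN-0}.

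With this in hand, Lemma~\ref{lem:ControlContDisc} turns exact/approximate controllability of \eqref{eq:WN-0} into exact/approximate controllability of \eqref{eq:WN-disc}. Here we use the remark that the control time may be enlarged, so that $p(1)^{-1}t\in\NN$. Proposition~\ref{prop:controlConnectFinDis} then states that both of these are equivalent to controllability of \eqref{eq:fin-dim-syst}. Combining the three equivalences gives all three bullet points.

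The main (mild) obstacle is the careful justification that the mild solutions correspond under $\mathcal{P}$, so that the controllability operators really intertwine, including the role of the time $t$. I would argue this first for classical solutions, where Proposition~\ref{prop:Simpl} applies directly, and then extend by density and the boundedness from well-posedness (Definition~\ref{defi:well-posed}).
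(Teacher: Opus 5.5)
Your proposal is correct and follows the paper's route. The paper's proof simply combines Lemma~\ref{lem:ControlContDisc} with Proposition~\ref{prop:controlConnectFinDis}, tacitly identifying \eqref{eq:WN-P0} with \eqref{eq:WN-0} through Proposition~\ref{prop:Simpl}; your observation that multiplication by $P(\cdot)$ is an isomorphism of $\L^2([0,1];\KK^n)$ with $\mathcal{B}_t^{\eqref{eq:WN-P0}}=\mathcal{P}\,\mathcal{B}_t^{\eqref{eq:WN-0}}$ is a valid way to make that implicit step explicit.
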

\begin{proof}
    The proof follows from Proposition \ref{prop:controlConnectFinDis} and Lemma \ref{lem:ControlContDisc}.
    \end{proof}

We are now in position to prove our main result on controllability/observability.
\begin{proof}[Proof of Theorem \ref{thm:MainresContr}]
The proof concerning the controllability follows from Theorem \ref{thm:Control-WN-Fin} where the condition that \eqref{eq:fin-dim-syst} is controllable is replaced by the so-called \emph{Kalman's rank condition} and \emph{Hautus test} for finite-dimensional systems, see e.g.~\cite[Chap.~6]{HinD2026b2}. For the observability we use the expression for the dual system from Theorem \ref{prop:DualViaSysNode} and apply the previous statements for controllability to it. The statements for oberservability are obtained with the duality result from Proposition \ref{prop:DualInfDimConti}.
\end{proof}

\begin{remark}
Let $\mathcal A$ be defined as in \eqref{eq:HomoOper}. As we  assume that $K$ is invertible, the operator $\mathcal A$ generates a strongly continuous semigroup. Let us further assume that this semigroup is exponentially stable. 
In Russell and Weiss
\cite{RuWe94} it was shown that a necessary condition for exact
observability of  \eqref{eq:WN-P0} is the following
version of the Hautus test:
There exists $m>0$ such that for every $s\in\mathbb C_-$ and every $x \in D(\mathcal A)$:
\begin{align*}\tag{HT}\label{eq:Hautustest}
  \Nl(sI-\mathcal A)x\Nr_{\L^2([0,1],\KK^n)}^2 +|{\rm Re}\, s|\,\Nl K_y (\lambda_0{x})(0)+L_y (\lambda_0{x})(1)\Nr_{\KK^n}^2 \ge m|{\rm Re}\, s|^2\Nl x\Nr_{\L^2([0,1],\KK^n)}^2.
\end{align*}
Here $\mathbb C_-$ denotes the open left half plane. Further, in \cite{RuWe94} it was conjectured that the Hautus test \eqref{eq:Hautustest} is also sufficient for exact controllability. Twenty years later, the conjecture was disproved 
\cite{JaZw04}.  As the Hautus test
\eqref{eq:Hautustest} is sufficient for approximate observability of exponentially
stable systems \cite{RuWe94}, we conclude that the exponentially stable system    \eqref{eq:WN-P0} is exactly observable if and only if \eqref{eq:Hautustest} is satisfied.
\end{remark}

\section{Examples}\label{sec:Ex}
\subsection{A network of two transport equations}\label{sec:Ex-nice}
We consider the system \begin{align}\label{eq:System-Example-Nice}
\begin{split}
\frac{\partial x}{\partial t}(\xi,t) &= -\frac{\partial}{\partial \xi}(\lambda_0(\xi)x(\xi,t)), \quad t\geq 0,\,\, \xi\in [0,1],\quad x(\xi,0) = x_0(\xi),  \quad \xi\in [0,1], \\
\left(\begin{smallmatrix}0\\ 1\end{smallmatrix}\right)u(t) &= - \left(\begin{smallmatrix}
        0 & -1 \\ -1&0
    \end{smallmatrix}\right)\lambda_0(0)x(0,t) -  \left(\begin{smallmatrix}
        \frac{1}{2}& 0 \\ 0 &-\frac{1}{2}
    \end{smallmatrix}\right) \lambda_0(1)x(1,t), \quad  t\geq 0,\\
y(t) &=-  \left(\begin{smallmatrix}
        -1&0
    \end{smallmatrix}\right) \lambda_0(1) x(1,t),  \quad t\geq 0, \end{split}
\end{align}
with initial condition $x_0\in \L^2([0,1];\RR^{2})$, control $u(t)\in\RR$, observation $y(t)\in\RR$ and arbitrary $\lambda_0\in\linebreak \L^\infty([0,1],(0,\infty))$ such that $\lambda_0^{-1}\in\L^\infty([0,1],(0,\infty))$.
Observe that \eqref{eq:System-Example-Nice} can be cast into \eqref{eq:WN-P0} with \begin{align*}
    K=\left(\begin{smallmatrix}
        0 & -1 \\ -1&0
    \end{smallmatrix}\right),\quad L=\left(\begin{smallmatrix}
        \frac{1}{2}& 0 \\ 0 &-\frac{1}{2}
    \end{smallmatrix}\right), \quad K_y=0, \quad L_y= \left(\begin{smallmatrix}
        -1&0
    \end{smallmatrix}\right), \quad \lambda_0=\lambda_0,\quad  P_0\equiv0.
\end{align*}
Clearly the matrix $K$ is invertible, which, according to Proposition \ref{satz:well-posedness} implies that \eqref{eq:System-Example-Nice} is well-posed. According to \eqref{eq:Mat} the associated matrices $A, B, C$ and $D$ are given by $A=\left(\begin{smallmatrix}
        0& -\frac{1}{2}\\\frac{1}{2}&0
    \end{smallmatrix}\right), \, B=\left(\begin{smallmatrix}
        1\\0
    \end{smallmatrix}\right),\, C=\left(\begin{smallmatrix}
        1&0
    \end{smallmatrix}\right)$ and $D=0$, respectively. 
\subsubsection*{Stability and its robustness}
We observe that $\sigma(A)\subset \DD$, therefore the system is stable, see Proposition \ref{prop:MainresStab}. To calculate the stability radii, we consider the function $G(z)= C(z-A)^{-1} B+D =\frac{z}{z^2+\frac{1}{4}}$ on $\partial \DD$ and we compute the following quantity
\begin{align*}
    G(\mathrm{e}^{\mathrm{i}s})= \frac{\mathrm{e}^{\mathrm{i}s}}{\mathrm{e}^{\mathrm{i}2s}+\frac{1}{4}}=\frac{\mathrm{e}^{\mathrm{i}s}(\mathrm{e}^{-\mathrm{i}2s}+\frac{1}{4})}{(\mathrm{e}^{\mathrm{i}2s}+\frac{1}{4})(\mathrm{e}^{-\mathrm{i}2s}+\frac{1}{4})} =\frac{\frac{5}{4}\cos(s)-\mathrm{i}\frac{3}{4}\sin(s)}{\frac{17}{16}+\frac{1}{2}\cos(2s)}, \quad s\in[0,2\pi].
\end{align*}
In the case $\sin(s)\neq0$, we have that $\dist(\mathrm{Re}( G(e^{is})),\RR\mathrm{Im}( G(e^{is})) = 0$ 
and when $\sin(s)=0$, it holds that $\dist(\mathrm{Re}( G(e^{is})),\RR\mathrm{Im}( G(e^{is}))=\frac{4}{5}$.    
Thus, we conclude with Theorem \ref{thm:MainresRob} that the real stability radius is $\frac{5}{4}$. Now observe that \begin{align*}
    \max_{s\in[0,2\pi]} \left| G(\mathrm{e}^{\mathrm{i}s}) \right| ^2 = \max_{s\in[0,2\pi]} \left| \frac{1}{\mathrm{e}^{\mathrm{i}2s}+\frac{1}{4}}  \right|^2 =\max_{s\in[0,2\pi]} \frac{1}{\frac{17}{16}+\frac{1}{2}\cos(2s)}=\frac{16}{9},
\end{align*}
which, according to Theorem \ref{thm:MainresRob}, implies that the complex stability radius is $\frac{3}{4}$.
\subsubsection*{Controllability and observability} 
The matrices $\left[\begin{smallmatrix}
        B & AB
    \end{smallmatrix}\right] = \left(\begin{smallmatrix}
         1 & 0\\
         0& \frac{1}{2}
    \end{smallmatrix}\right)$ and $ \left[\begin{smallmatrix}
        C \\ CA
    \end{smallmatrix}\right] =\left(\begin{smallmatrix}
        1 &0\\ 0&-\frac{1}{2}
    \end{smallmatrix}\right)$ have full rank, hence \eqref{eq:System-Example-Nice} is exactly controllable and exactly observable by Theorem \ref{thm:MainresContr}.

\begin{remark}
    The system \eqref{eq:System-Example-Nice} with $\lambda_0=1$ is equivalent to \begin{align*}
    \frac{\partial^2w}{\partial t^2}(\xi,t) &=\frac{\partial^2w}{\partial \xi^2}(\xi,t),\quad t\geq 0, \xi\in [0,1], \quad w(\xi,0)=w_{0,1}(\xi), \frac{\partial w}{\partial t}(\xi,0) = w_{0,2}(\xi), \quad \xi\in [0,1],\\
    0&= -\frac{3}{4} \frac{\partial w}{\partial t}(0,t) +\frac{1}{4}\frac{\partial w}{\partial\xi}(0,t),\\
    u(t)&=\frac{1}{4}\frac{\partial w}{\partial t}(1,t) +\frac{3}{4}\frac{\partial w}{\partial\xi}(1,t),\quad t\geq 0,\\
    y(t)&=\frac{1}{2}\frac{\partial w}{\partial t}(0,t) +\frac{1}{2}\frac{\partial w}{\partial\xi}(0,t),\quad t\geq 0,
\end{align*}
with an initial condition $w_0 := (\begin{matrix}w_{0,1} & w_{0,2}\end{matrix})^\top\in\L^2([0,1],\RR^2)$. In the above wave equation $w$ is related to $x$ by $x_1(\xi,t)=\frac{1}{2}\left(\frac{\partial w}{\partial t}(1-\xi,t)+\frac{\partial w}{\partial \xi}(1-\xi,t)\right)$ and $x_2(\xi,t) = \frac{1}{2}\left(-\frac{\partial w}{\partial t}(\xi,t)+\frac{\partial w}{\partial \xi}(\xi,t)\right), t\geq 0, \xi\in [0,1]$, see \cite[Sec.~5]{HasA2025a3} for more details.
\end{remark}

\subsection{A co-current heat-exchanger}\label{sec:Ex-Heat-Exch}

\begin{figure}
\begin{center}
%
\colorlet{Ecol}{orange!90!black}
\colorlet{EcolFL}{orange!80!black}
\colorlet{veccol}{green!45!black}
\colorlet{EFcol}{red!60!black}
\tikzstyle{charged}=[top color=blue!20,bottom color=blue!40,shading angle=10]
\tikzstyle{darkcharged}=[very thin,top color=blue!60,bottom color=blue!80,shading angle=10]
\tikzstyle{charge+}=[very thin,top color=red!80,bottom color=red!80!black,shading angle=-5]
\tikzstyle{charge-}=[very thin,top color=blue!50,bottom color=blue!70!white!90!black,shading angle=10]
\tikzstyle{gauss surf}=[red!40!black,top color=green!2,bottom color=red!80!black!70,shading angle=5,fill opacity=0.5]
\tikzstyle{gauss lid}=[gauss surf,middle color=red!80!black!20,shading angle=40,fill opacity=0.6]
\tikzstyle{gauss dark}=[red!50!black,fill=red!60!black!70,fill opacity=0.8]
\tikzstyle{gauss line}=[red!40!black]
\tikzstyle{gauss dashed line}=[red!60!black!80,dashed,line width=0.1]
\tikzstyle{EField}=[->,thick,Ecol]
\tikzstyle{vector}=[->,thick,veccol]
\tikzstyle{normalvec}=[->,thick,blue!80!black!80]
\tikzstyle{EFieldLine}=[thick,EcolFL,decoration={markings,
          mark=at position 0.5 with {\arrow{latex}}},
          postaction={decorate}]
\tikzstyle{measure}=[fill=white,midway,outer sep=2]
\def\L{2}
\def\W{0.2}
\def\N{4}
%
%
%
\begin{tikzpicture}[scale=4]
  \def\M{8}
  \def\R{0.4*\L}
  \def\g{0.2*\R}
  \def\G{0.4*\R}
  \def\a{0.33*\L}
  \coordinate (L)  at (-\a,0);
  \coordinate (R)  at (+\a,0);
  \coordinate (TL) at (-\a,\G);
  \coordinate (TR) at (+\a,\G);
  \coordinate (BL) at (-\a,-\G);
  \coordinate (BR) at (+\a,-\G);
  
  \draw[gauss line] (TR) arc (90:270:{\g} and {\G});
  
  \draw[charged] (-6.35*\L/16,-\W/2) --++(5.8*\L/8,0) to[out=0,in=0] ++ (0,\W) --++ (-5.8*\L/8,0) -- cycle;
  \draw[charged] (-6.35*\L/16,-\W/2) to[out=180,in=180] ++ (0,\W) to[out=0,in=0] cycle;
  \draw[->] (-5*\L/16,0) -- (-2.5*\L/16,0);
  \node[] at (0.07,0) {$T_i(\xi,t)$};
  \draw[->] (3.9*\L/16,0) -- (6.4*\L/16,0);
  
  \begin{scope}
    \clip (-\L/2,-0.5*\W)
      --++ (\L/2-\a,0) to[out=50,in=-50] ++(0,1.0*\W) --++ (-\L/2+\a,0) --++ (0,\G)
      --++ (\L-2*\a,0) --++ (0,{-2*(\G+\W)}) --++ (-\L+2*\a,0) -- cycle;
    \draw[gauss lid] (L) ellipse ({\g} and {\G});
  \end{scope}

  \draw[|->|,dashed] (-\a,-1.5*\G) -- (\a,-1.5*\G);
  \node[above] at (0,-1.5*\G) {$\xi$};
  \node[below] at (-\a,-1.5*\G) {$0$};
  \node[below] at (\a,-1.5*\G) {$1$};
  \draw[->] (-\a,1.5*\G) -- (-\a,1*\G);
  \node[above] at (-\a,1.6*\G) {$u(t)$};
  \draw[gauss surf]
    (BL) arc (-90:90:{\g} and {\G}) --
    (TR) arc (90:-90:{\g} and {\G}) -- cycle;
   \draw[->] (-5*\L/16,0.2) -- (-2.5*\L/16,0.2);
  \node[] at (0.07,0.2) {$T_e(\xi,t)$};
  \draw[->] (3.9*\L/16,0.2) -- (6*\L/16,0.2);
  
  
\end{tikzpicture}
\end{center}
\caption{Schematic profile view of a co-current heat-exchanger.}\label{fig:HE}
\end{figure}
We consider a co-current heat exchanger, which is a device consisting of two tubes of length 1, inserted into each other, see Figure \ref{fig:HE}. Each tube contains a fluid and both fluids are traveling in the same direction. Moreover, heat transfer is only happening between both tubes. By $\tilde{T}_i(\xi,t)$ and $\tilde{T}_e(\xi,t)$ we denote the temperature of the internal and external tube, respectively, at time $t\ge0$ and position $\xi\in[0,1]$. This system is described by the PDEs \begin{align*}
\frac{\partial {\tilde{T}_i}}{\partial t}(\xi,t) &= -v(\xi)\frac{\partial{\tilde{T}_i}}{\partial \xi}(\xi,t) + \alpha_i(\xi) \left(\tilde{T}_e(\xi,t)-\tilde{T}_i(\xi,t) \right), \quad \xi\in [0,1],\quad t\geq0, \\
\frac{\partial {\tilde{T}_e}}{\partial t}(\xi,t) &= -v(\xi)\frac{\partial{\tilde{T}_e}}{\partial \xi}(\xi,t) + \alpha_e(\xi) \left(\tilde{T}_i(\xi,t)-\tilde{T}_e(\xi,t) \right), \quad \xi\in [0,1],\quad t\geq0,
\end{align*} where $\alpha_i(\xi),\alpha_e(\xi)>0$, $\xi\in [0,1]$ are the heat transfer functions and $v(\xi)>0$, $\xi\in [0,1]$ the velocity of both fluids. Such a model may be found in e.g.~\cite[Sec.~2]{MaiA2010a} or \cite[Sec.~12.3]{OguB1994b}.
Since, $\alpha_i,\alpha_e$ and $v$ are natural quantities, we assume them sufficiently smooth, in particular we suppose that $\alpha_i, \alpha_e\in\L^\infty([0,1];(0,\infty))$ and $v\in\L^\infty([0,1];(0,\infty))$ such that $v^{-1}\in\L^\infty([0,1];(0,\infty))$. If we set $\tilde{T}(\xi,t):=\left(\begin{smallmatrix}
\tilde{T}_i(\xi,t) & \tilde{T}_e(\xi,t)
\end{smallmatrix}\right)^\T$, we obtain  \begin{align*}
\frac{\partial \tilde{T}}{\partial t}(\xi,t) &= -v(\xi)\frac{\partial\tilde{T}}{\partial \xi}(\xi,t) + \underbrace{\left(\begin{smallmatrix}-\alpha_i(\xi) & \alpha_i(\xi)\\ \alpha_e(\xi) & -\alpha_e(\xi)\end{smallmatrix}\right)}_{=:\tilde{P}_0(\xi)}\tilde{T}(\xi,t), \quad \xi\in[0,1],\quad  t\geq0, \quad
\tilde{T}(\xi,0)=\tilde{T}_0(\xi), \quad \xi\in [0,1],
\end{align*} where $\tilde{T}_0$ is the initial heat distribution. Furthermore, we assume that the temperature of the internal tube at $\xi=0$ vanishes. The input is the temperature of the external tube at $\xi=0$ and the output is the difference of temperature in the outer tube between the outlet and the inlet, leading to the following conditions \begin{align*}
\left(\begin{smallmatrix}0\\ 1\end{smallmatrix}\right)u(t) &= \tilde{T}(0,t),\quad t\geq 0,\\
y(t) &= \left(\begin{smallmatrix}0 & -1\end{smallmatrix}\right)\tilde{T}(0,t) + \left(\begin{smallmatrix}0 & 1\end{smallmatrix}\right)\tilde{T}(1,t),\quad t\geq 0.
\end{align*}
Now we define ${T}(\xi,t):= \frac{1}{v(\xi)}\tilde{T}(\xi,t)$, $\xi\in[0,1], t\geq0$. Hence, 
\begin{align}\label{eq:HeatExch}
\begin{split}
    \frac{\partial T }{\partial t}(\xi,t) &= -\frac{\partial}{\partial \xi}(v(\xi)T (\xi,t)) + \tilde{P}_0(\xi)T (\xi,t), \quad \xi\in[0,1],\quad  t\geq0, \quad T (\xi,0)=v(\xi)^{-1}\tilde{T}_0(\xi), \quad \xi\in[0,1],\\
\left(\begin{smallmatrix}0\\ 1\end{smallmatrix}\right)u(t) &= v(0)T (0,t), \quad  t\geq0, \\
y(t) &= (\begin{smallmatrix}0 & -1\end{smallmatrix})v(0)T (0,t) + (\begin{smallmatrix}0 & 1\end{smallmatrix})v(1)T (1,t), \quad t\geq0.
\end{split}
\end{align}
Thus, \eqref{eq:HeatExch} has the form \eqref{eq:WN-P0} with $\lambda_0(\cdot) = v(\cdot), P_0(\cdot)=\frac{1}{v(\cdot)}\tilde{P}_0(\cdot), K = -I, L = 0, K_y = \left(\begin{smallmatrix}0 & 1\end{smallmatrix}\right), L_y = -\left(\begin{smallmatrix}0 & 1\end{smallmatrix}\right)$. The invertibility of $K$ implies that \eqref{eq:HeatExch} is well-posed, see Proposition \ref{satz:well-posedness}. The solution to $P'(\xi) = v(\xi)^{-1}\tilde{P}_0(\xi)P(\xi), P(0) = I$ is given by \begin{align*}
P(\xi) = \left(\begin{smallmatrix} 1-\int_0^\xi e^{h(\eta)}\frac{\alpha_i(\eta)}{v(\eta)}\d\eta &  1 - e^{h(\xi)} - \int_0^\xi e^{h(\eta)}\frac{\alpha_e(\eta)}{v(\eta)}\d\eta\\
 1 - e^{h(\xi)}-\int_0^\xi e^{h(\eta)}\frac{\alpha_i(\eta)}{v(\eta)}\d\eta &  1 - \int_0^\xi e^{h(\eta)}\frac{\alpha_e(\eta)}{v(\eta)}\d\eta\end{smallmatrix}\right)
\end{align*}
with $h:[0,1]\to \mathbb{R}^-, h(\eta) := -\int_0^\eta v(\zeta)^{-1}\left(\alpha_i(\zeta)+\alpha_e(\zeta)\right)\d\zeta$, see \cite[Sec.~4]{HasA2025a}. Therefore the matrices in \eqref{eq:Mat} are given by \begin{align*}
A &= 0, \qquad B = \left(\begin{smallmatrix}0\\ 1\end{smallmatrix}\right),\qquad C = \left(\begin{smallmatrix} 1-e^{h(1)} -\int_0^1 e^{h(\eta)}\frac{\alpha_i(\eta)}{v(\eta)}\d\eta &\hspace{0.7cm}  1 - \int_0^1 e^{h(\eta)}\frac{\alpha_e(\eta)}{v(\eta)}\d\eta\end{smallmatrix}\right)=:\left(\begin{smallmatrix}
    C_1 &C_2
\end{smallmatrix}\right),\quad D = -1.
\end{align*}

\subsubsection*{Stability and its robustness}
It holds that $\sigma(A)\subset\DD$, which implies that \eqref{eq:HeatExch} with $u\equiv 0$ is stable, see Proposition \ref{prop:MainresStab}. According to Theorem \ref{thm:MainresRob} we consider the function $G(z)=C(z-A)^{-1} B+D = \frac{C_2}{z}-1$. The complex stability radius is given by $\left(\max_{z\in\DD} \left| \frac{C_2}{z}-1 \right|\right)^{-1} =\left(\max_{z\in\DD} \left| C_2 \bar{z}-1 \right|\right)^{-1} =\left(|C_2|+1\right)^{-1}$, see Theorem \ref{thm:MainresRob}. For $z\in\DD$ it holds that $G(z)= C_2 \bar{z}-1 = C_2 \Re z-1- \mathrm{i}C_2\Im z$. If $\Im z\neq 0$ and $C_2\neq0$ we have $\dist \left( \Re G(z), \RR \Im G(z) \right) = \dist \left( C_2\Re z-1, -\RR C_2\Im z \right)=0$. For $\Im z \neq 0$ and $C_2=0$ it holds that $\dist \left( \Re G(z), \RR \Im G(z) \right) =1$.
If $\Im z=0 $ we get $\max_{\Im z=0, z\in\DD} \dist \left( \Re G(z), \RR \Im G(z) \right)= 1+|C_2|$. Noting that $|D|=1$ implies that the real stability radius is also given by $\frac{1}{1+|C_2|}$, see Theorem \ref{thm:MainresRob}.

\subsubsection*{Controllability and observability}
Both matrices $\left[\begin{smallmatrix}
B & A B
\end{smallmatrix}\right] = \left(\begin{smallmatrix}
0 & 0 \\ 1 &0
\end{smallmatrix}\right)$ and $
\left[\begin{smallmatrix}
C \\ CA
\end{smallmatrix}\right] = \left(\begin{smallmatrix}C_1&C_2\\ 0 & 0\end{smallmatrix}\right)$ have no full rank. Hence, Theorem \ref{thm:MainresContr} implies that \eqref{eq:HeatExch} is neither exactly nor approximately controllable and neither exactly nor approximately observable.

\section{Perspectives}\label{sec:Persp}
Extending the proposed approach to systems where $\lambda_0$ is replaced by a diagonal matrix with different entries would be worth investigating in further research. This would notably help in covering a much broader class of applications like the Timoshenko beam \cite{JacB2012b} or counter-current heat exchangers. This extension would be the first step towards the extension of our results to the class of so-called first-order port-Hamiltonian systems, see e.g.~\cite{JacB2012b}. Further research could also aim at considering other perturbations than output feedback and to study the robustness under those perturbations. This perspective could open the door to the design of robust stabilizing feedback for \eqref{eq:WN-P0}. Another extension of this work could be dedicated to the study of the transfer function of \eqref{eq:WN-P0}, and more specifically a study of its properties in terms of belonging to some algebras of transfer functions.

\section*{Acknowledgments}
This research was conducted with the financial supports of the F.R.S-FNRS (Belgium) and the Deutsche Forschungsgemeinschaft (DFG, German Research Foundation). A.~Hastir is supported by the FNRS, Grant CR 40010909. He has also been supported by the DFG - Project-ID 532208976. All the authors acknowledge funding by the DFG, Germany
- Project-ID 531152215 - CRC 1701.

\bibliographystyle{abbrvnat}
\bibliography{0Bibliothek.bib}
\end{document}